\long\def\forget#1{}
\newcommand{\Verkuerzung}[2]{#1}
\newcommand{\lang}[1]{\mbox{#1}}
\newcounter{commentcounter}
\newcounter{urscommentcounter}
\def\?{\ 
{\bf\color{red}???}\ 
\immediate\write16{}
\immediate\write16{Warning: There was still a question mark . . . }
\immediate\write16{}}
\theoremstyle{plain}
\newtheorem{theorem}{Theorem}[section]
\newtheorem{corollary}[theorem]{Corollary}
\newtheorem{proposition}[theorem]{Proposition}
\theoremstyle{definition}
\newtheorem{definition}[theorem]{Definition}
\newtheorem{definition-theorem}[theorem]{Definition-Theorem}
\newtheorem{definition-remark}[theorem]{Definition-Remark}
\newtheorem{remark}[theorem]{Remark}
\theoremstyle{remark}
\newcounter{zahl}
\def\theenumi{(\alph{enumi})}
\def\p@enumii{\theenumi}
\newcommand{\DS}{\displaystyle}
\newcommand{\TS}{\textstyle}
\newcommand{\SC}{\scriptstyle}
\newcommand{\SSC}{\scriptscriptstyle}
\DeclareMathOperator{\Aut}{Aut}
\DeclareMathOperator{\Gal}{Gal}
\DeclareMathOperator{\Koh}{H}
\DeclareMathOperator{\Hom}{Hom}
\DeclareMathOperator{\Isom}{Isom}
\DeclareMathOperator{\Quot}{Frac}
\DeclareMathOperator{\SL}{SL}
\DeclareMathOperator{\Spec}{Spec}
\DeclareMathOperator{\Spf}{Spf}
\DeclareMathOperator{\Tr}{Tr}
\newcommand{\alg}{{\rm alg}}
\newcommand{\fppf}{{\it fppf\/}}
\newcommand{\fpqc}{{\it fpqc\/}}
\DeclareMathOperator{\id}{\,id}
\newcommand{\red}{{\rm red}}
\newcommand{\sep}{{\rm sep}}
\newcommand{\scrA}{{\mathscr{A}}}
\newcommand{\scrH}{{\mathscr{H}}}
\newcommand{\scrS}{{\mathscr{S}}}
\DeclareMathOperator{\whtimes}{\mathchoice
            {\wh{\raisebox{0ex}[0ex]{$\DS\times$}}}
            {\wh{\raisebox{0ex}[0ex]{$\TS\times$}}}
            {\wh{\raisebox{0ex}[0ex]{$\SC\times$}}}
            {\wh{\raisebox{0ex}[0ex]{$\SSC\times$}}}}
\renewcommand{\phi}{\varphi}
\renewcommand{\epsilon}{\varepsilon}
\newcommand{\BOne} {{\mathchoice{\hbox{\rm1\kern-2.7pt l\kern.9pt}}
                              {\hbox{\rm1\kern-2.7pt l\kern.9pt}}
                              {\hbox{\scriptsize\rm1\kern-2.3pt l\kern.4pt}}
                              {\hbox{\scriptsize\rm1\kern-2.4pt l\kern.5pt}}}}
\newcommand{\BA}{{\mathbb{A}}}
\newcommand{\BD}{{\mathbb{D}}}
\newcommand{\BF}{{\mathbb{F}}}
\newcommand{\BG}{{\mathbb{G}}}
\newcommand{\BL}{{\mathbb{L}}}
\newcommand{\BN}{{\mathbb{N}}}
\newcommand{\BP}{{\mathbb{P}}}
\newcommand{\BQ}{{\mathbb{Q}}}
\newcommand{\BR}{{\mathbb{R}}}
\newcommand{\BS}{{\mathbb{S}}}
\newcommand{\BU}{{\mathbb{U}}}
\newcommand{\BZ}{{\mathbb{Z}}}
\newcommand{\CF}{{\cal{F}}}
\newcommand{\CG}{{\cal{G}}}
\newcommand{\CI}{{\cal{I}}}
\newcommand{\CL}{{\cal{L}}}
\newcommand{\CM}{{\cal{M}}}
\newcommand{\CN}{{\cal{N}}}
\newcommand{\CO}{{\cal{O}}}
\newcommand{\CP}{{\cal{P}}}
\newcommand{\CS}{{\cal{S}}}
\newcommand{\CT}{{\cal{T}}}
\newcommand{\CU}{{\cal{U}}}
\newcommand{\CV}{{\cal{V}}}
\newcommand{\CW}{{\cal{W}}}
\newcommand{\CX}{{\cal{X}}}
\newcommand{\CZ}{{\cal{Z}}}
\newcommand{\FG}{{\mathfrak{G}}}
\newcommand{\FX}{{\mathfrak{X}}}
\newcommand{\LM}{\textbf{M}^{loc}}
\let\setminus\smallsetminus
\newcommand{\ul}[1]{{\underline{#1}}}
\newcommand{\ol}[1]{{\overline{#1}}}
\newcommand{\wh}[1]{{\widehat{#1}}}
\newcommand{\wt}[1]{{\widetilde{#1}}}
\newcommand{\SSS}{S}
\newcommand{\invlim}[1][]{\ifthenelse{\equal{#1}{}}
{\DS \lim_{\longleftarrow}}
{\DS \lim_{\underset{#1}{\longleftarrow}}}
}
\newcommand{\dirlim}[1][]{\ifthenelse{\equal{#1}{}}
{\DS \lim_{\longrightarrow}}
{\DS \lim_{\underset{#1}{\longrightarrow}}}
}
\newcommand{\dbl}{{\mathchoice{\mbox{\rm [\hspace{-0.15em}[}}
                              {\mbox{\rm [\hspace{-0.15em}[}}
                              {\mbox{\scriptsize\rm [\hspace{-0.15em}[}}
                              {\mbox{\tiny\rm [\hspace{-0.15em}[}}}}
\newcommand{\dbr}{{\mathchoice{\mbox{\rm ]\hspace{-0.15em}]}}
                              {\mbox{\rm ]\hspace{-0.15em}]}}
                              {\mbox{\scriptsize\rm ]\hspace{-0.15em}]}}
                              {\mbox{\tiny\rm ]\hspace{-0.15em}]}}}}
\newcommand{\dpl}{{\mathchoice{\mbox{\rm (\hspace{-0.15em}(}}
                              {\mbox{\rm (\hspace{-0.15em}(}}
                              {\mbox{\scriptsize\rm (\hspace{-0.15em}(}}
                              {\mbox{\tiny\rm (\hspace{-0.15em}(}}}}
\newcommand{\dpr}{{\mathchoice{\mbox{\rm )\hspace{-0.15em})}}
                              {\mbox{\rm )\hspace{-0.15em})}}
                              {\mbox{\scriptsize\rm )\hspace{-0.15em})}}
                              {\mbox{\tiny\rm )\hspace{-0.15em})}}}}
\newcommand{\dotBD}{\vbox{\hbox{\kern2pt\bf.}\vskip-4.5pt\hbox{$\BD$}}}
\DeclareMathOperator{\QIsog}{QIsog}
\DeclareMathOperator{\Nilp}{\CN \!{\it ilp}}
\DeclareMathOperator{\Sets}{\CS \!{\it ets}}
\def\ulHZ{{\underline{\hat Z\!}\,}{}}
\def\longto{\longrightarrow}
\def\isoto{\stackrel{}{\mbox{\hspace{1mm}\raisebox{+1.4mm}{$\SC\sim$}\hspace{-3.5mm}$\longrightarrow$}}}
\newbox\mybox
\def\arrover#1{\mathrel{
       \setbox\mybox=\hbox spread 1.4em{\hfil$\scriptstyle#1$\hfil}
       \vbox{\offinterlineskip\copy\mybox
             \hbox to\wd\mybox{\rightarrowfill}}}}
\newcommand{\BaseOfD}{\BF}
\newcommand{\BaseFldOfLocSht}{k}
\newcommand{\BaseFldInSectUnif}{k}
\newcommand{\genericG}{P}
\DeclareMathOperator{\SpaceFl}{\CF\ell}
\newcommand{\tauLoc}{\hat\tau}
\newcommand{\RemJb}{Remark~4.11}
\begin{document}

\author{Esmail Arasteh Rad\forget{\footnote{Part of this research was carried out while I was visiting Institute For Research In Fundamental Sciences (IPM).}}}
\date{\today}
\title{Local Models For Rapoport-Zink Spaces For Local $\BP$-Shtukas\\}

\maketitle

\begin{abstract}
This article provides a ``local'' complementary to the previous results concerning the local models for the moduli stacks of ``global'' $\FG$-shtukas. Here we study the geometry of Rapoport-Zink spaces for local $\BP$-shtukas by constructing local models for them. We further discuss certain applications, including some results related to the theory of formal nearby cycles associated to these spaces and the semi-simple trace of Frobenius on the corresponding sheaves.

\noindent
{\it Mathematics Subject Classification (2000)\/}: 
11G09,  
(11G18,  
14L05,  
14M15)  
\end{abstract}

\section{Introduction}

Recall that a Shimura datum $(\BG,X,K)$ consists of\\[1mm]
- a reductive group $\BG$ over the ring of rational numbers $\BQ$, with center $Z$,\\[0.05mm]
- $\BG(\BR)$-conjugacy class $X$ of homomorphisms $\BS\to G_\BR$ for the Deligne torus $\BS$,\\[0.05mm]
- a compact open sub-group $K\subseteq \BG(\BA_f)$,\\[1mm]
subject to certain conditions; for example see \cite{MilneShimura}. Here  $\BA_f$ is the ring of finite adeles. Fix a prime number $p$ and write $K=K_p \cdot K^p$ for compact open subgroups $K_p\subseteq \BG(\BQ_p)$ and $K^p\subseteq \BG(\BA^p)$, where $\BA^p$ denotes the ring of adeles away from $p$. The above tuple determines a reflex field $E:=E(\BG,X)$, and the corresponding Shimura variety 
$$
Sh_K(\BG,X)=\BG(\BQ)\backslash\bigl(X\times \BG(\BA_f)/K\bigr).
$$
\noindent  
The Shimura variety $Sh_K(\BG,X)$ admits a canonical integral model $\scrS_K$ over \forget{$W= W(\ol\BF_p)$} $\CO_E$\forget{ for $K=G(\BZ_p)K^p$}, for a sufficiently small $K^p\subseteq \BG(\BA^p)$; see \cite{Kis}. The significance of these varieties come from the fact that they come equipped with many symmetries, which encode important arithmetic data. In addition, for wide range of cases, they appear as moduli spaces for motives, according to Deligne's conception of Shimura varieties \cite{Deligne1} and \cite{Deligne2}. From this perspective, it is expected that the Langlands correspondence will be realized on their cohomology.\\[1mm]
Shimura varieties have local counterparts, which are called \emph{local Shimura varieties}. Recall that a local Shimura variety $Sh(G,[b],\{\mu\})$ is expected to arise from  \emph{local Shimura datum} $(G,\{\mu\},[b])$, according to a conjecture of Rapoport and Viehmann \cite[Subsection 5.2]{R-V}, consisting of \\[1mm]
- a connected reductive group $G$ over $\BQ_p$,\\[0.05mm]
- a (geometric) conjugacy class $\{\mu\}$ of a (minuscule) cocharacter $\mu : \BG_m \to G$,\\[0.05mm]
- a class $[b]$ in $B(G, \mu)$ of Kottwitz set of $\sigma$-conjugacy classes.\\[1mm]
Moreover, they are supposed in addition to be subject to certain conditions, see \cite[Properties 5.4]{R-V}. Note however that the problem with the lake of uniqueness restricts one to the local Shimura data of Hodge type, which means that the local Shimura datum can be embedded in a local Shimura datum of the form $(GL_n,\{\mu'\},[b'])$. \\[1mm]
In this context the integral models for Shimura varieties have local counterparts called \emph{Rapoport-Zink spaces}. This means in particular that their $\ell$-adic cohomology is supposed to eventually realize the local Langlands correspondence, according to a conjecture of Kottwitz \cite{Rap94}. Let us explain it a bit further. Consider an (integral) Shimura datum $(\CP, \{\mu\}, [b])$, where $\CP$ is a smooth affine group scheme over $\BZ_p$ with generic fiber $G$, and set $\CO=\CO_{E}$, for the corresponding reflex field $E:=E_\mu$, i.e. the field of definition of the cocharacter $\mu$. Assume further that $G$ splits over a tamely ramified extension and $\CP$ is parahoric (i.e. the special fiber $\CP_s=\CP_{\BF_p}$ is connected). To such a datum one associates a formal scheme $\breve{\CM}:=\breve{\CM}(\CP, [b], \{\mu\})$ over $\CO$, which is called \emph{Rapoport-Zink space} associated to the tuple $(\CP, [b], \{\mu\})$. The underlying scheme $\breve{\CM}_{\red}$ is a union of affine Deligne-Lusztig varieties. For details see \cite{RV} and \cite{SchWei}. See also \cite{Kim} and Shen \cite{Shen} for generalizations to the Rapoport-Zink spaces of Hodge type and abelian type, respectively.

In \cite{RZ} the authors propose a local model theory to study local properties of Rapoport-Zink spaces. Recall that although the flatness of Rapoport-Zink spaces had been expected by Rapoport and Zink in \cite{RZ}, it was later observed that the flatness might fail in general. The local model in \cite{RZ} is denoted by $\textbf{M}^{naive}$. To achieve the flatness, one should apply certain modifications which leads to the construction of the local model $\LM$ inside $\textbf{M}^{naive}$. 

\noindent
Let $F/\BQ_p$ be a finite extension. A local model triple $(G,\{\mu\},\CP)$ over $F$ consists of\\[1mm]
- a reductive group $G$ over $F$,\\[0.05mm]
- a (geometric) conjugacy class $\{\mu\}$ of a cocharacter $\mu$  of $G$ and\\[0.05mm]
- a parahoric group scheme $\CP$ over $\CO_F$.\\[1mm]
Assuming that $G$ splits over a tamely ramified extension and that the order of the fundamental group $\pi^1(G(\ol\BQ_p
)^{der})$ is prime to $p$, one can associate to a local model datum $(G,\{\mu\},\CP)$, a variety $\scrA:=\scrA(G,\{\mu\},\CP)$ over $k:=\ol\kappa_E$ inside the affine flag variety $\CF\ell_{\CP,k}$, with an action of $\CP\otimes_{\CO_F} k$. The variety $\scrA$ is the union $\cup_{\omega\in Adm_{\CP}(\mu)} S(\omega)$ of affine Schubert varieties $S(\omega)$. Here $Adm_{\CP}(\mu)=\{\omega\in \wt{W}^\CP\backslash\wt{W}/\wt{W}^\CP; \omega \preceq t^\mu  \}$; see remark \ref{RemAffSchVarandIwahori_Weylgp}. A local model $\LM:=\LM(G,\{\mu\},\CP)$ attached to a local model datum $(G,\{\mu\},\CP)$ is a projective scheme $\LM$ over $\CO_E$, with generic fiber $\textbf{M}_\eta^{loc}$ (resp. special fiber $\textbf{M}_s^{loc}$), with an action of $\CP\otimes_{\CO_F}\CO_E$, subject to the following conditions, namely it is flat over $\CO_E$ with reduced special fiber, there is a $\CP\otimes k$-equivariant isomrphism $\textbf{M}_s^{loc}\cong \scrA$ and finally
there is a $G_E$-equivariant isomrphism $\textbf{M}_\eta^{loc}\cong G_E/P_{\{\mu\}}$. Note that in particular all irreducible components of $\LM\otimes k$ are normal and Cohen-Macaulay, see \cite[Theorem~8.4]{PR2}. The existence and uniqueness of $\LM$ is known for $EL$ and $PEL$ cases. In general only the existence is known according to \cite{PZ} and uniqueness is not known; also compare \cite[Proposition~18.3.1]{SchWei}, where the authors use an alternative definition and then they serve uniqueness but not the existence.\\
\noindent
According to the local model theory there is 
a local model roof

\begin{equation}
\xygraph{
!{<0cm,0cm>;<1cm,0cm>:<0cm,1cm>::}
!{(0,0) }*+{\wt{\CM}}="a"
!{(-1.5,-1.5) }*+{\breve{\CM}}="b"
!{(1.5,-1.5) }*+{\LM,}="c"
"a":_{\pi}"b" "a":^{\pi^{loc}}"c"
}  
\end{equation}
\noindent
where $\pi:\wt{\CM}\to\breve{\CM}$ is a $\CP$-torsor and $\pi^{loc}$ is formally smooth of relative dimension $\dim \CP$. In particular for every $x\in\breve{\CM}(k)$ there is a $y\in\LM(k)$ with $\wh{\CO}_{\breve{\CM},x}\cong\wh{\CO}_{\LM,y}$.

\forget{

------------------------------------

Integral models for Shimura varieties have local counterparts, which are called \emph{Rapoport-Zink spaces}, this means in particular that their $\ell$-adic cohomology is supposed to eventually realize the local Langlands correspondence, according to a conjecture of Kottwitz \cite{Rap94}. They arise from  \emph{local Shimura data}, see \cite{R-V}, and, roughly speaking, parametrize families of quasi-isogenies of $p$-divisible groups to a fixed one.  Let us explain it a bit further, namely, we recall that an (integral) \emph{local Shimura datum}  is a tuple $(\CP, \{\mu\}, [b])$ consisting of

\begin{enumerate}

\item[-]
a smooth affine group scheme $\CP$ over $\BZ_p$ with a connected reductive generic fiber $G$ over $\BQ_p$,
\item[-]
a (geometric) conjugacy class $\{\mu\}$ of a (minuscule) cocharacter $\mu : \BG_m \to G$,

\item[-]
a class $[b]$ in $B(G, \mu)$ of Kottwitz set of $\sigma$-conjugacy classes.

\end{enumerate}

In particular a local Shimura datum $(\CP, \{\mu\}, [b])$ determines the reflex field $E:=E_\mu$, which is the field of definition of the cocharacter $\mu$. Let us set $\CO=\CO_{E}$. Assume that $G$ splits over a tamely ramified extension and $\CP$ is parahoric (i.e. the special fiber $\CP_s=\CP_{\BF_p}$ is connected). To such a datum one associates a formal scheme $\breve{\CM}:=\breve{\CM}(\CP, [b], \{\mu\})$ over $\CO$, which is called \emph{Rapoport-Zink space} associated to the local Shimura
datum $(\CP, [b], \{\mu\})$. The underlying scheme $\breve{\CM}_{\red}$ is a union of affine Deligne-Lusztig varieties. For details see \cite{RV} and \cite{SchWei}. See also \cite{Kim} and Shen \cite{Shen} for generalizations to the Rapoport-Zink spaces of Hodge type and abelian type, respectively.

In \cite{RZ} the authors propose a local model theory to study local properties of Rapoport-Zink spaces. Recall that although the flatness of Rapoport-Zink spaces had been expected by Rapoport and Zink in \cite{RZ}, it was later observed that the flatness might fail in general. The local model in \cite{RZ} is denoted by $\textbf{M}^{naive}$. To achieve the flatness, one should apply certain modifications which leads to the construction of the true local model $\LM$ inside $\textbf{M}^{naive}$. 

\noindent
Let $F/\BQ_p$ be a finite extension. A local model triple $(G,\{\mu\},\CP)$ over $F$ consists of
\begin{enumerate}
\item[-]
a reductive group $G$ over $F$,
\item[-]
a (geometric) conjugacy class $\{\mu\}$ of a cocharacter $\mu$  of $G$ and
\item[-]
a parahoric group scheme $\CP$ over $\CO_F$.
\end{enumerate}

Assume that $G$ splits over a tamely ramified extension and the order of the fundamental group $\pi^1(G(\ol\BQ_p
)^{der})$ is prime to $p$. One can associate to a local model datum $(G,\{\mu\},\CP)$, a variety $\scrA:=\scrA(G,\{\mu\},\CP)$ over $k:=\ol\kappa_E$ inside the affine flag variety $\CF\ell_{\CP,k}$, with an action of $\CP\otimes_{\CO_F} k$. The variety $\scrA$ is the union $\cup_{\omega\in Adm_{\CP}(\mu)} S(\omega)$ of affine Schubert varieties $S(\omega)$. Here $Adm_{\CP}(\mu)=\{\omega\in \wt{W}^\CP\backslash\wt{W}/\wt{W}^\CP; \omega \preceq t^\mu  \}$; see remark \ref{RemAffSchVarandIwahori_Weylgp}. A local model $\LM:=\LM(G,\{\mu\},\CP)$ attached to a local model datum $(G,\{\mu\},\CP)$ is a projective scheme $\LM$ over $\CO_E$, with generic fiber $\textbf{M}_\eta^{loc}$ (resp. special fiber $\textbf{M}_s^{loc}$), with an action of $\CP\otimes_{\CO_F}\CO_E$, subject to the following conditions, namely it is flat over $\CO_E$ with reduced special fiber, there is a $\CP\otimes k$-equivariant isomrphism $\textbf{M}_s^{loc}\cong \scrA$ and finally
there is a $G_E$-equivariant isomrphism $\textbf{M}_\eta^{loc}\cong G_E/P_{\{\mu\}}$. Note that in particular all irreducible components of $\LM\otimes k$ are normal and Cohen-Macaulay, see \cite[Theorem~8.4]{PR2}. The existence and uniqueness of $\LM$ is known for $EL$ and $PEL$ cases. In general only the existence is known according to \cite{PZ} and uniqueness is not known; also compare \cite[Proposition~18.3.1]{SchWei}, where the authors use an alternative definition and then they serve uniqueness but not the existence.\\
\noindent
According to the local model theory there is 
a local model roof

\begin{equation}
\xygraph{
!{<0cm,0cm>;<1cm,0cm>:<0cm,1cm>::}
!{(0,0) }*+{\wt{\CM}}="a"
!{(-1.5,-1.5) }*+{\breve{\CM}}="b"
!{(1.5,-1.5) }*+{\LM,}="c"
"a":_{\pi}"b" "a":^{\pi^{loc}}"c"
}  
\end{equation}
\noindent
where $\pi:\wt{\CM}\to\breve{\CM}$ is a $\CP$-torsor and $\pi^{loc}$ is formally smooth of relative dimension $\dim \CP$. In particular for every $x\in\breve{\CM}(k)$ there is a $y\in\LM(k)$ with $\wh{\CO}_{\breve{\CM},x}\cong\wh{\CO}_{\LM,y}$.

-------------------------------------

}

\bigskip

In a series of articles \cite{AH_Local}, \cite{AH_Global}, \cite{AH_LR}, \cite{A_CMot} and \cite{AH_LM}, we developed different aspects of the analogues picture over function fields. This includes construction of the moduli stacks of global $\FG$-shtukas and Rapoport-Zink spaces for local $\BP$-shtukas in ramified case, and describing their deformation theory and uniformization theory, see \cite{AH_Global} and \cite{AH_Local}. Moreover we described them as a moduli for motives \cite{A_CMot}, and proved Langlands-Rapoport conjecture for these moduli stacks \cite{AH_LR}. In addition, we established local model theory for the moduli of global $\FG$-shtukas in \cite{AH_LM}. To improve this picture, in the present article we discuss the local geometry of the Rapoport-Zink spaces for local $\BP$-shtukas by constructing local models for them. Namely, we prove the following  

\begin{theorem}
Let $(\BP,\hat{Z},b)$ be a local $\nabla\scrH$-datum and let $\breve{\CM}_{\ul\BL}^{\hat{Z}}:=\breve{\CM}(\BP,\hat{Z},b)$ denote the associated Rapoport-Zink space. There is a roof of morphisms
\begin{equation}
\xygraph{
!{<0cm,0cm>;<1cm,0cm>:<0cm,1cm>::}
!{(0,0) }*+{\wt{\CM}_{\ul\BL}^{\hat{Z}}}="a"
!{(-1.5,-1.5) }*+{\breve{\CM}_{\ul\BL}^{\hat{Z}}}="b"
!{(1.5,-1.5) }*+{\hat{Z},}="c"
"a":_{\pi}"b" "a":^{\pi^{loc}}"c"
}  
\end{equation}

satisfying the following properties

\begin{enumerate}
\item
the morphism $\pi^{loc}$ is formally smooth and
\item
the $L^+\BP$-torsor $\pi: \wt{\CM}_{\ul\BL}^{\hat{Z}}\to \breve{\CM}_{\ul\BL}^{\hat{Z}}$ admits a section $s'$ locally for the \'etale topology on $\breve{\CM}_{\ul\BL}^{\hat{Z}}$ such that $\pi^{loc}\circ s'$ is formally \'etale.
\end{enumerate}

\end{theorem}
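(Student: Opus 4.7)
The plan is to construct $\wt{\CM}_{\ul\BL}^{\hat Z}$ as the functor whose $R$-points are triples $(\ul\CL,\delta,\epsilon)$, where $(\ul\CL,\delta)$ is an $R$-valued point of $\check\CM_{\ul\BL}^{\hat Z}$ and $\epsilon$ is a trivialization of the underlying $L^+\BP$-torsor of $\CL$. The forgetful map $\pi:(\ul\CL,\delta,\epsilon)\mapsto(\ul\CL,\delta)$ is tautologically an $L^+\BP$-torsor. Define $\pi^{loc}$ by sending such a triple to the Frobenius element
\[
\gamma_\epsilon \;:=\; \epsilon\circ\tau_\CL\circ(\sigma^{\ast}\epsilon)^{-1}\;\in\;L\BP(R);
\]
the boundedness of $\ul\CL$ by $\hat Z$ ensures that $\gamma_\epsilon$ factors through $\wh Z$, producing the desired morphism.

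To prove formal smoothness of $\pi^{loc}$, consider a nilpotent closed immersion $\Spec R\into\Spec R'$, a triple $(\ul\CL,\delta,\epsilon)$ over $R$, and a lift $\wt\gamma\in\wh Z(R')$ of $\gamma_\epsilon$. Equip the trivial $L^+\BP$-torsor over $R'$ with the Frobenius structure specified by $\wt\gamma$; this yields a bounded local $\BP$-shtuka over $R'$ restricting to $\CL$, together with a tautological trivialization $\wt\epsilon$ restricting to $\epsilon$. It remains to lift the quasi-isogeny $\delta$, and this is provided by the rigidity of quasi-isogenies of local $\BP$-shtukas, \PropLocalIsogeny{} (see also \RemLocalIsogeny), which produces a canonical extension of $\delta$ to $R'$.

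Since $\BP$ is smooth and affine, the $L^+\BP$-torsor $\pi$ is trivial locally for the \'etale topology on $\check\CM_{\ul\BL}^{\hat Z}$; pick any such local section $s'$. Formal smoothness of $\pi^{loc}\circ s'$ is inherited from $\pi^{loc}$. For formal unramifiedness, suppose two lifts in $\wt\CM_{\ul\BL}^{\hat Z}(R')$ of a point of (the source of) $s'$ induce the same $\wt\gamma\in\wh Z(R')$. The fixed section $s'$ pins down the trivialization $\wt\epsilon$ on the nose, so the underlying bounded local $\BP$-shtuka is forced to be the one defined by $\wt\gamma$ via $\wt\epsilon$; the quasi-isogeny lifts uniquely by the same rigidity statement invoked above. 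Hence the two lifts coincide and $\pi^{loc}\circ s'$ is formally \'etale.

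The main obstacle is exactly the rigidity of quasi-isogenies: both the existence part of formal smoothness and the uniqueness part of formal \'etaleness reduce to it, whereas the remaining material—construction of the frame-bundle $\wt\CM_{\ul\BL}^{\hat Z}$, identification of $\pi$ as an $L^+\BP$-torsor, \'etale-local triviality of such torsors—is a formal manipulation of trivializations. Once \PropLocalIsogeny{} is available, the local model roof is assembled by bookkeeping.
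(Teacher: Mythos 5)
Your overall architecture (frame bundle over $\check{\CM}_{\ul\BL}^{\hat{Z}}$, map to $\wh Z$ via the Frobenius matrix, rigidity to move the quasi-isogeny) is the right shape, but there are genuine gaps, and they all stem from treating points of $\wh Z$ as elements of the loop group. By Proposition \ref{PropFlRepUnBoundedRZ}, a point of $\wh Z(R')\subset\wh{\SpaceFl}_{\BP}(R')$ is an isomorphism class of pairs (an $L^+\BP$-torsor together with a trivialization of the associated $LP$-torsor), i.e.\ \'etale-locally a coset in $LP/L^+\BP$; it is \emph{not} an element of $LP(R')$. In your existence step you ``equip the trivial torsor with the Frobenius structure specified by $\wt\gamma$'': before you can do this you must lift the given flag-variety point to an element of $LP(R')$ extending $\gamma_\epsilon$, i.e.\ lift a trivialization of an $L^+\BP$-torsor across the nilpotent thickening. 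This is exactly where the paper's proof uses the smoothness of $\BP$ (the \v{C}ech-cocycle argument following \cite[Proposition~2.2]{H-V}), an input that never appears in your write-up; your closing claim that rigidity is the only non-formal ingredient is therefore inaccurate (in fact the paper does not quote rigidity as a black box in part (1): it rewrites the extension of $\delta$ via the factorization of Frobenius through the thickening, which is the same mechanism as in Proposition \ref{PropRigidityLocal}).

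The formal \'etaleness argument fails as stated. Since the image in $\wh Z$ only records the coset of $\gamma_\epsilon$ modulo $L^+\BP$, two lifts in $\CM'(R')$ with the same image need only have Frobenius matrices differing by right multiplication by some $h\in L^+\BP(R')$ with $h\equiv 1$ over $R$; the assertion that ``the underlying shtuka is forced to be the one defined by $\wt\gamma$ via $\wt\epsilon$'' does not follow, and likewise ``formal smoothness of $\pi^{loc}\circ s'$ is inherited from $\pi^{loc}$'' is not a formal implication (a lift in $\wt{\CM}_{\ul\BL}^{\hat{Z}}(S)$ produced by smoothness of $\pi^{loc}$ need not lie on the section). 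The paper resolves precisely this $L^+\BP$-ambiguity by a device absent from your proposal: $\wt{\CM}_{\ul\BL}^{\hat{Z}}$ is defined with a trivialization $\gamma$ of $\hat\sigma^\ast\CL_+$ rather than of $\CL_+$, and the section $s'$ is taken to be $\sigma^\ast\gamma'$ for the universal trivialization $\gamma'$ over the \'etale cover $\CM'\to\check{\CM}_{\ul\BL}^{\hat{Z}}$. For a nilpotent closed immersion $i\colon S_0\to S$ the $q$-Frobenius factors as $\sigma_S=i\circ j$, so $\sigma^\ast\gamma'=j^\ast\gamma'_0$ is canonically determined by the $S_0$-point; this pins down $\tau=g^{-1}j^\ast\gamma'_0$, then $\delta$ via the same factorization, and \'etaleness of $\CM'\to\check{\CM}_{\ul\BL}^{\hat{Z}}$ pins down $\gamma'$, yielding existence and uniqueness of the lift simultaneously. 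Without this Frobenius twist (or an equivalent argument killing the $L^+\BP$-indeterminacy) your part (2) is not proved. Two minor points: with the convention of Definition \ref{DefLBC} the element whose class lies in $\wh Z$ is $\sigma^\ast\epsilon\circ\tau^{-1}\circ\epsilon^{-1}$, the inverse of your $\gamma_\epsilon$; and the rigidity statement to cite in this paper is Proposition \ref{PropRigidityLocal}, not the externally numbered proposition you refer to.
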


This is theorem \ref{ThmRapoportZinkLocalModel} in the text. See also definitions \ref{DefLocalNablaHdata} and \ref{DefRZForLocPSht}, and the assignment (\ref{eqAssignRZToLD}), for the definition of local $\nabla\scrH$-data (which are function fields analogs for local Shimura data), and the associated Rapoport-Zink spaces. Note that a significant part of local model theorems for Shimura varieties and Rapoport-Zink spaces is the construction of the local model, e.g. see \cite{PR1}, \cite{PZ}, \cite{PRS}, and also \cite{Lev}. These constructions are quite involved, especially in the non-PEL cases. But in the function fields setting, the local model is indeed given as a part of the local $\nabla\scrH$-datum, which is the analog of (integral) local Shimura datum. \\  

From the above perspective, this article provides a ``local'' complementary to \cite{AH_LM}, where the authors established the theory of local models for moduli of global $\FG$-shtukas, both in the sense of Beilinson-Drinfeld-Gaitsgory-Varshavsky, and also in the sense of Rapoport and Zink, in the following general setup. Namely, in \cite{AH_LM}, the authors treat the case where $\FG$ is a smooth affine group scheme over a smooth projective curve $C$ over $\BF_q$. Here we also treat a very general case\forget{, namely, we only assume that $\BP$ is a smooth affine group scheme over $\BD$. It can be easily seen that this assumption can not be weakened further}. Like the Shimura variety side, the local model theorem for Rapoport-Zink spaces for local $\BP$-shtukas has several immediate consequences. We discuss some of the applications in the remaining sections. For example it clarifies type of singularities in certain cases. It also helps to answer the questions about the flatness of Rapoport-Zink spaces for local $\BP$-shtukas over their reflex rings. Note that apart from local consequences of the local model theorem, it has also some global consequences. This is because the local model diagram is defined globally. We discuss some of these applications in \cite{AH_MR}.  Moreover, the local model theory can be implemented to study formal nearby cycles cohomology and the semi-simple trace of Frobenius on the cohomology of these spaces by relating them to the better understood semi-simple trace on the usual nearby cycles sheaves corresponding to the boundedness conditions. We discuss this in subsection \ref{Subsect Semi-simple trace and Frob}. In this subsection we first discuss the case of global moduli stacks for $\FG$-shtukas and then we discuss the formal nearby cycles for Rapoport-Zink spaces for local $\BP$-shtukas. Note that, to this goal we crucially use the uniformization theory of global $\FG$-shtukas established in \cite{AH_Global}. For the discussion on the semi-simple trace of Frobenius on the cohomology of (certain) Schubert varieties (here given by our local boundedness conditions) inside twisted affine flag varieties, we refer the reader to \cite{HR1}. Note further that for certain technical reasons we prefer to use a variant  
of Berkovich's formal nearby cycles \cite{Berk}, which has been constructed and studied by Mieda \cite{Mieda}, see Def-Rem \ref{Def-RemMFNC} and Remark \ref{RemBerNVCandRZSpaces}. We show the corresponding nearby cycle sheaves by $R\Psi_{\FX}^\textbf{Ber}$ and $R\Psi_{\FX} \Lambda$, respectively. In particular we prove the following.

\begin{proposition}
 Assume that $(\ul\BL_i)_i:=\hat{\ul\Gamma}(\ul\CG_0)$, for some global $\FG$-shtuka $\ul\CG_0$, where $\hat{\ul\Gamma}(-)$ denote the global-local functor; see \cite[Definition 5.1]{AH_Global}. Here $\FG$ is a parahoric Bruhat-Tits group scheme over $C$, with $\BP_i=\FG_{\nu_i}:=\FG\times_C\wh\CO_{C,\nu_i}$.  For any tuple $\hat{\ul Z}:=(\hat Z_i)_{i=1,\dots,n}$ of boundedness conditions, there is a canonical isomorphism

$$
\pi_{red}^\ast R\Psi_{ \left(\Gamma\backslash\breve{\CM}_{(\ul{\BL}_i)_i}^{\hat{\ul Z}}\right)_\Delta}^\textbf{Ber} \Lambda \to R\Psi_{\left(\breve{\CM}_{(\ul{\BL}_i)_i}^{\hat{\ul Z}}\right)_\Delta}\Lambda,
$$
for a separated discrete subgroup $\Gamma\subseteq J_{(\BL_i)_i}$ (see paragraph \ref{Point1}). Here $\pi$ denotes the projection $\breve{\CM}_{(\ul{\BL}_i)_i}^{\hat{\ul Z}}\to\Gamma\backslash\breve{\CM}_{(\ul{\BL}_i)_i}^{\hat{\ul Z}}$, and the subscript $\Delta$ indicates that these spaces are obtained by pulling back the corresponding spaces under the morphism $\Spf\BaseFldInSectUnif\dbl\xi\dbr\to\Spf\BaseFldInSectUnif\dbl\ul\xi\dbr$, given by $\xi_i \mapsto \xi$.\\
\end{proposition}
This is Proposition \ref{PropMiedaCompBerkovich} in the text. Moreover as a consequence we observe the following in Corollary \ref{Cor_SemisimpleTrace}.

\begin{corollary}
Let $\nu$ be a place on $C$ and set $\BP:=\BP_\nu$. Let $\ul\BL$ be a local $\BP$-shtuka, which comes from a global $\FG$-shtuka $\ul\CG$ under the functor $\hat{\Gamma}_{\nu}(-)$. Set $\breve{\CM}:=\breve{\CM}_{\ul\BL}^{\hat{Z}}$ and $\kappa=\kappa_{\hat{Z}}$. Let $\kappa_r/\kappa$ be a finite extension of degree $r$. Let $x$ be a point in $\breve{\CM}(\kappa_r)$ and let $y$ be the image $\pi^{loc}(y')$ of a point $y'$ in $\wt\CM_{\ul\BL}^{\hat{Z}}$ above $x$ under $\pi$ (see the local model roof (\ref{EqnablaHRoof})). Then 
$$
tr^{ss}\left(Frob_r; \left(R\Psi_{\breve{\CM},c}\ol\BQ_\ell\right)_x\right)=tr^{ss}\left(Frob_r; \left(R\psi_{\hat{Z}}\ol\BQ_\ell\right)_y\right).
$$
Here $Frob_r$ denotes the geometric Frobenius in $\Gal(\ol\kappa_r/\kappa_r)$.

\end{corollary}
\section*{Acknowledgment}\label{Acknowledgment}

I warmly thank Urs Hartl for numerous stimulating conversations and explanations, and also for his continues encouragements and support. I thank Peter Scholze for inspiring conversations in the sideline of a 2021 MFO workshop, as well as his valuable comments \forget{regarding formal nearby cycles cohomology with compact support and the semi-simple trace of Frobenius,} that helped with reformulating and proving a result related to formal nearby cycles. I would also like to thank Yoichi Mieda for his explanations related to his theory of formal nearby cycles and for his valuable comments. In addition, I would like to thank Somayeh Habibi, Mohsen Asgharzadeh and Mohammad Hadi Hedayatzadeh for their comments, as well as inspiring conversations. I profited a lot from lectures given by Sophie Morel at IPM and wish to express my deep appreciation. This paper grew out of lectures delivered at a number theory conference held at IPM (April-May 2017) and for that I would like to thank the organizers and the staff.

\tableofcontents 

\subsection{Notation and conventions}\label{SectNotation and Conventions}

Throughout this article we denote by
\begin{tabbing}

$\genericG_\nu:=\FG\times_C\Spec Q_\nu,$\; \=\kill

$\BF_q$\> a finite field with $q$ elements of characteristic $p$,\\[1mm]
$C$\> a smooth projective geometrically irreducible curve over $\BF_q$,\\[1mm]
$Q:=\BF_q(C)$\> the function field of $C$,\\[1mm]







$\BaseOfD$\> a finite field containing $\BF_q$,\\[1mm]

$\wh A:=\BF\dbl z\dbr$\>  the ring of formal power series in $z$ with coefficients in $\BF$ ,\\[1mm]
$\wh Q:=\Quot(\wh A)$\> its fraction field,\\[1mm]

$\nu$\> a closed point of $C$, also called a \emph{place} of $C$,\\[1mm]
$\BF_\nu$\> the residue field at the place $\nu$ on $C$,\\[1mm]

$A_\nu$\> the completion of the stalk $\CO_{C,\nu}$ at $\nu$,\\[1mm]

$Q_\nu:=\Quot(A_\nu)$\> its fraction field,\\[1mm]

$\BD_R:=\Spec R\dbl z \dbr$ \> \parbox[t]{0.79\textwidth}{\Verkuerzung
{
the spectrum of the ring of formal power series in $z$ with coefficients in an $\BaseOfD$-algebra $R$,
}

{}
}

\Verkuerzung
{
\\[1mm]
$\hat{\BD}_R:=\Spf R\dbl z \dbr$ \> the formal spectrum of $R\dbl z\dbr$ with respect to the $z$-adic topology.
}
{}
\end{tabbing}

\noindent
For a formal scheme $\wh S$ we denote by $\Nilp_{\wh S}$ the category of schemes over $\wh S$ on which an ideal of definition of $\wh S$ is locally nilpotent. We  equip $\Nilp_{\wh S}$ with the \'etale topology. We also denote by
\begin{tabbing}
$\genericG_\nu:=\FG\times_C\Spec \wh Q_\nu,$\; \=\kill

$n\in\BN_{>0}$\> a positive integer,\\[1mm]
$\ul \nu:=(\nu_i)_{i=1\ldots n}$\> an $n$-tuple of closed points of $C$,\\[1mm]
$\BA_C^\ul\nu$\> the ring of rational adeles of $C$ outside $\ul\nu$,\\[1mm]

\forget{
$A_\ul\nu$\> the completion of the local ring $\CO_{C^n,\ul\nu}$ of $C^n$ at the closed point $\ul\nu=(\nu_i)$,\\[1mm]

$\Nilp_{A_\ul\nu}:=\Nilp_{\Spf A_\ul\nu}$\> \parbox[t]{0.79\textwidth}{the category of schemes over $C^n$ on which the ideal defining the closed point $\ul\nu\in C^n$ is locally nilpotent,}\\[2mm]
}
$\Nilp_{\BaseOfD\dbl\zeta\dbr}$\lang{$:=\Nilp_{\hat\BD}$}\> \parbox[t]{0.79\textwidth}{the category of $\BD$-schemes $S$ for which the image of $z$ in $\CO_S$ is locally nilpotent. We denote the image of $z$ by $\zeta$ since we need to distinguish it from $z\in\CO_\BD$.}\\[2mm]
$\FG$\> a smooth affine group scheme of finite type over $C$,
with connected \\ 

\> reductive generic fiber $G$, \\[2mm]
$\BP_\nu:=\FG\times_C\Spec  A_\nu,$ \> the base change of $\FG$ to $\Spec A_\nu$,\\[1mm]
$\genericG_\nu:=\FG\times_C\Spec Q_\nu,$ \> the generic fiber of $\BP_\nu$ over $\Spec Q_\nu$,\\[1mm]

$\BP$\> a smooth affine group scheme of finite type over $\BD=\Spec\BaseOfD\dbl z\dbr$,\\ 
\>  with connected 
 reductive generic fiber $P$ over $\Spec\BaseOfD\dpl z\dpr$.\\[1mm] 
\end{tabbing}

We denote by $\sigma_S \colon  S \to S$ the $\BF_q$-Frobenius endomorphism which acts as the identity on the points of $S$ and as the $q$-power map on the structure sheaf. Likewise we let $\hat{\sigma}_S\colon S\to S$ be the $\BaseOfD$-Frobenius endomorphism of an $\BaseOfD$-scheme $S$. \forget{We set
\begin{tabbing}
$\genericG_\nu:=\FG\times_C\Spec Q_\nu,$\; \=\kill
$C_S := C \times_{\Spec\BF_q} S$ ,\> and \\[1mm]
$\sigma := \id_C \times \sigma_S$.

\end{tabbing}
}

\begin{remark}\label{RemAffSchVarandIwahori_Weylgp}

 Assume that the generic fiber $\genericG$ of $\BP$ over $\Spec\BaseOfD\dpl z\dpr$ is connected reductive. Consider the base change $\genericG_L$ of $\genericG$ to $L=\BaseOfD^\alg\dpl z\dpr$. Let $S$ be a maximal split torus in $\genericG_L$ and let $T$ be its centralizer. Since $\BaseOfD^\alg$ is algebraically closed, $\genericG_L$ is quasi-split and so $T$ is a maximal torus in $\genericG_L$. Let $N = N(T)$ be the normalizer of $T$ and let $\CT^0$ be the identity component of the N\'eron model of $T$ over $\CO_L=\BaseOfD^\alg\dbl z\dbr$. The \emph{Iwahori-Weyl group} associated with $S$ is the quotient group $\wt{W}= N(L)\slash\CT^0(\CO_L)$. It is an extension of the finite Weyl group $W_0 = N(L)/T(L)$ by the coinvariants $X_\ast(T)_I$ under $I=\Gal(L^\sep/L)$:
$$
0 \to X_\ast(T)_I \to \wt W \to W_0 \to 1.
$$
By \cite[Proposition~8]{H-R} there is a bijection
\begin{equation}\label{EqSchubertCell}
L^+\BP(\BaseOfD^\alg)\backslash L\genericG(\BaseOfD^\alg)/L^+\BP(\BaseOfD^\alg) \isoto \wt{W}^\BP  \backslash \wt{W}\slash \wt{W}^\BP
\end{equation}
where $\wt{W}^\BP := (N(L)\cap \BP(\CO_L))\slash \CT^0(\CO_L)$, and where $LP(R)=P(R\dbl z\dbr)$ and $L^+\BP(R)=\BP(R\dbl z\dbr)$ are the loop group, resp.\ the group of positive loops of $\BP$; see \cite[\S\,1.a]{PR2}, or \cite[\S4.5]{B-D}, \cite{Ngo-Polo} and \cite{Faltings03} when $\BP$ is constant. Let $\omega\in \wt{W}^\BP\backslash \wt{W}/\wt{W}^\BP$ and let $\BaseOfD_\omega$ be the fixed field in $\BaseOfD^\alg$ of $\{\gamma\in\Gal(\BaseOfD^\alg/\BaseOfD)\colon \gamma(\omega)=\omega\}$. There is a representative $g_\omega\in L\genericG(\BaseOfD_\omega)$ of $\omega$; see \cite[Example~4.12]{AH_Local}. The \emph{Schubert variety} $\CS(\omega)$ associated with $\omega$ is the ind-scheme theoretic closure of the $L^+\BP$-orbit of $g_\omega$ in $\SpaceFl_\BP\whtimes_{\BaseOfD}\BaseOfD_\omega$. It is a reduced projective variety over $\BaseOfD_\omega$. For further details see \cite{PR2} and \cite{Richarz}.  

\end{remark}

\section{Rapoport-Zink spaces for local $\BP$-shtukas}\label{SectRZSpaces}

In this section we first recall the definition of local $\BP$-shtukas and some of their basic properties, see subsection \ref{SubSectLocalPSht} below. Then, in subsection \ref{SubSecLocalData}, we define local $\nabla\scrH$-data, and explain how one assigns a Rapoport-Zink space to a local $\nabla\scrH$-datum.

\subsection{Local $\BP$-shtukas}\label{SubSectLocalPSht}

Let $\BaseOfD$ be a finite field and $\BaseOfD\dbl z\dbr$ be the power series ring over $\BaseOfD$ in the variable $z$. We let $\BP$ be a smooth affine group scheme over $\BD:=\Spec\BaseOfD\dbl z\dbr$ with connected fibers. Set $\dot\BD:=\Spec \BF\dpl z\dpr$.

\begin{definition}\label{DefLoopGps}
The \emph{group of positive loops associated with $\BP$} is the infinite dimensional affine group scheme $L^+\BP$ over $\BaseOfD$ whose $R$-valued points for an $\BaseOfD$-algebra $R$ are $L^+\BP(R):=\BP(R\dbl z\dbr):=\BP(\BD_R):=\Hom_\BD(\BD_R,\BP)\,$.
The \emph{group of loops associated with $P$} is the $\fpqc$-sheaf of groups $LP$ over $\BaseOfD$ whose $R$-valued points for an $\BaseOfD$-algebra $R$ are $LP(R):=P(R\dpl z\dpr):=P(\dot{\BD}_R):=\Hom_{\dot\BD}(\dot\BD_R,P)\,,$ where we write $R\dpl z\dpr:=R\dbl z \dbr[\frac{1}{z}]$ and $\dot{\BD}_R:=\Spec R\dpl z\dpr$. It is representable by an ind-scheme of ind-finite type over $\BaseOfD$; see \cite[\S\,1.a]{PR2}, or \cite[\S4.5]{B-D}, \cite{Ngo-Polo}, \cite{Faltings03} when $\BP$ is constant.
Let $\scrH^1(\Spec \BaseOfD,L^+\BP)\,:=\,[\Spec \BaseOfD/L^+\BP]$ (respectively $\scrH^1(\Spec \BaseOfD,LP)\,:=\,[\Spec \BaseOfD/LP]$) denote the classifying space of $L^+\BP$-torsors (respectively $LP$-torsors). It is a stack fibered in groupoids over the category of $\BaseOfD$-schemes $S$, whose category $\scrH^1(\Spec \BaseOfD,L^+\BP)(S)$ consists of all $L^+\BP$-torsors (resp.\ $LP$-torsors) on $S$. The inclusion of sheaves $L^+\BP\subset LP$ gives rise to the natural 1-morphism 
\begin{equation}\label{EqLoopTorsor}
\scrH^1(\Spec \BaseOfD,L^+\BP)\longto \scrH^1(\Spec \BaseOfD,LP),~\CL_+\mapsto \CL\,.
\end{equation}
\end{definition}

\begin{definition}
The affine flag variety $\SpaceFl_\BP$ is defined to be the ind-scheme representing the $fpqc$-sheaf associated with the presheaf defined by sending $
R$ to $L\genericG(R)/L^+\BP(R)\;=\;P\left(R\dpl z \dpr \right)/\BP \left(R\dbl z\dbr\right)$, on the category of $\BaseOfD$-algebras; compare Definition~\ref{DefLoopGps}.
\end{definition}

\begin{remark}\label{RemFlagisquasiproj}
 Note that $\SpaceFl_\BP$ is ind-quasi-projective over $\BaseOfD$ according to Pappas and Rapoport \cite[Theorem~1.4]{PR2}, and hence ind-separated and of ind-finite type over $\BaseOfD$. The quotient morphism $LP \to \SpaceFl_\BP$ admits sections locally for the \'etale topology.\forget{They proceed as follows. When $\BP = \SL_{r,\BD}$, the \fpqc-sheaf $\CF\ell_\BP$ is called the \emph{affine Grassmanian}. It is an inductive limit of projective schemes over $\BF$, that is, ind-projective over $\BF$; see \cite[Theorem~4.5.1]{B-D} or \cite{Faltings03, Ngo-Polo}.  By \cite[Proposition~1.3]{PR2} and \cite[Proposition~2.1]{AH_Global} there is a faithful representation $\BP ֒\to \SL_r$ with quasi-affine quotient. Pappas and Rapoport show in the proof of \cite[Theorem~1.4]{PR2} that $\CF\ell_\BP \to \CF\ell_{\SL_r}$ is a locally closed embedding, and moreover, if $\SL_r /\BP$ is affine, then $\CF\ell_\BP \to \CF\ell_{\SL_r}$ is even a closed embedding and $\CF\ell_\BP$ is ind-projective.} Moreover, if the fibers of $\BP$ over $\BD$ are geometrically connected, then $\CF\ell_\BP$ is ind-projective if and only if $\BP$ is a parahoric group scheme in the sense of Bruhat and Tits \cite[D\'efinition 5.2.6]{B-T}; see \cite[Theorem A]{Richarz16}. 

\end{remark}

\noindent
Recall that for p-divisible groups $X$ and $Y$, one defines the following

\begin{enumerate}
\item
An \emph{isogeny} $f:X\to Y$ is a morphism which is an epimorphism as \fppf-sheaves and whose kernel is representable by a finite flat group scheme over $S$.
\item
A \emph{quasi-isogeny} is a global section $f$ of the Zariski sheaf $\Hom(X,Y)\otimes_\BZ \BQ$  such that $n\cdot f$ is an isogeny locally on $S$, for an integer $n\in\BZ$. 
\end{enumerate}

\noindent
Let us now recall the analogous definition over function fields, where we instead have \emph{local $\BP$-shtukas} and \emph{quasi-isogenies} between them.

\begin{definition}\label{localSht}
\begin{enumerate}
\item
A local $\BP$-shtuka over $S\in \Nilp_{\BaseOfD\dbl\zeta\dbr}$ is a pair $\ul \CL = (\CL_+,\tau)$ consisting of an $L^+\BP$-torsor $\CL_+$ on $S$ and an isomorphism of the associated loop group torsors $\tauLoc\colon  \hat{\sigma}^\ast \CL \to\CL$. 

\item
A \emph{quasi-isogeny} $f\colon\ul\CL\to\ul\CL'$ between two local $\BP$-shtukas $\ul{\CL}:=(\CL_+,\tau)$ and $\ul{\CL}':=(\CL_+' ,\tau')$ over $S$ is an isomorphism of the associated $LP$-torsors $f \colon  \CL \to \CL'$ such that the following diagram  

\[
\xymatrix {
\hat{\sigma}^\ast\CL \ar[r]^{\tau} \ar[d]_{\hat{\sigma}^\ast f} & \CL\ar[d]^f  \\
\hat{\sigma}^\ast\CL' \ar[r]^{\tau'}&  \CL' \;.
}
\]

becomes commutative.
\item
We denote by $\QIsog_S(\ul{\CL},\ul{\CL}')$ the set of quasi-isogenies between $\ul{\CL}$ and $\ul{\CL}'$ over $S$. 

\item
We let $Loc-\BP-Sht(S)$ denote the category of local $\BP$-shtukas over $S$ with quasi-isogenies as the set of morphisms. 

\end{enumerate}
\end{definition}

Recall that quasi-isogenies of p-divisible groups are rigid in the following sense. Let $X$ and $Y$ be p-divisible groups over $S$. Let $\ol S\to S$ be a nilpotent thickening, i.e. a closed immersion defined by a nilpotent sheaf of ideal. Then, the restriction $QIsog_S(X,Y)\to QIsog_{\ol S}(\ol X,\ol Y)$ between the set of quasi-isogenies is a bijection. Likewise, local $\BP$-shtukas enjoy a similar rigidity property.

\bigskip

\begin{proposition}[Rigidity of quasi-isogenies for local $\BP$-shtukas] \label{PropRigidityLocal}
Let $S$ be a scheme in $\Nilp_{\BaseOfD\dbl\zeta\dbr}$ and
let $j \colon  \bar{S}\rightarrow S$ be a closed immersion defined by a sheaf of ideals $\CI$ which is locally nilpotent.
Let $\ul{\CL}$ and $\ul{\CL}'$ be two local $\BP$-shtukas over $S$. Then the map
$\QIsog_S(\ul{\CL}, \ul{\CL}') \longto \QIsog_{\bar{S}}(j^*\ul{\CL}, j^*\ul{\CL}')$, defined by sending $\quad f$ to $j^*f$, is a bijection of sets.
\end{proposition}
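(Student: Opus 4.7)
The plan is to reduce to the case $\CI^2 = 0$ and then exploit that the $\BF$-Frobenius $\hat\sigma$ acts as the $q$-th power map on $\CO_S$ with $q = |\BF|\geq 2$, which annihilates any square-zero ideal. Hence $\hat\sigma$ factors canonically through the reduction to $\bar S$, forcing both uniqueness of lifts and existence via a canonical formula. The reduction step is standard: since $\CI$ is locally nilpotent, on affine opens $j$ factors as a finite chain of closed immersions with square-zero defining ideals, so it suffices to treat one such step.

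Assume $\CI^2 = 0$. Since morphisms of $LP$-torsors form an \'etale sheaf on $S$, both injectivity and surjectivity may be checked \'etale locally. By Remark~\ref{RemFlagisquasiproj} the morphism $LP \to \SpaceFl_\BP$ admits sections \'etale locally, so $LP$-torsors are \'etale-locally trivial. Fixing simultaneous trivializations $\CL \cong LP_S \cong \CL'$ on an affine \'etale neighborhood $S = \Spec R$, the Frobenius structures $\tau,\tau'$ are encoded by left multiplication by elements $b, b' \in LP(R) = P(R\dpl z\dpr)$, and a quasi-isogeny $f$ by left multiplication by an element $h \in P(R\dpl z\dpr)$; the commutativity of the compatibility diagram in Definition~\ref{localSht} translates into the identity
\[
h \cdot b \;=\; b' \cdot \hat\sigma(h).
\]

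The coefficient-wise $q$-th power $\hat\sigma \colon R\dpl z\dpr\to R\dpl z\dpr$ kills $\CI\dpl z\dpr=\ker\bigl(R\dpl z\dpr\onto \bar R\dpl z\dpr\bigr)$ since $\CI^q\subset \CI^2 = 0$, hence factors through a ring map $\widetilde{\hat\sigma}\colon \bar R\dpl z\dpr \to R\dpl z\dpr$ (with $\bar R = R/\CI$); applying $P$, the value $\hat\sigma(h')$ for any $h'\in P(R\dpl z\dpr)$ depends only on the image $\bar h'\in P(\bar R\dpl z\dpr)$. For injectivity, two quasi-isogenies $h_1,h_2$ with the same reduction $\bar h$ satisfy $\hat\sigma(h_1)=\hat\sigma(h_2)$, hence $h_1 b = b'\hat\sigma(h_1) = b'\hat\sigma(h_2) = h_2 b$ and therefore $h_1=h_2$. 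For surjectivity, given $\bar h$ over $\bar S$ with $\bar h\,\bar b = \bar b'\,\hat\sigma(\bar h)$, put
\[
h \;:=\; b' \cdot \widetilde{\hat\sigma}(\bar h)\cdot b^{-1} \;\in\; P(R\dpl z\dpr);
\]
its reduction equals $\bar b'\,\hat\sigma(\bar h)\,\bar b^{-1} = \bar h$ by the compatibility for $\bar h$, and $\hat\sigma(h)=\widetilde{\hat\sigma}(\bar h)$ yields $h b = b'\widetilde{\hat\sigma}(\bar h) = b'\hat\sigma(h)$, so $h$ is the required lift. The \'etale-local lifts then glue by the uniqueness just established. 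The main obstacle is just bookkeeping of trivialization conventions (left vs.\ right torsors, directions in the compatibility square); once these are pinned down, the Frobenius cancellation forced by $q\geq 2$ makes both halves of the argument essentially automatic.
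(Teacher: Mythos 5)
Your argument is correct, and it runs on the same engine as the proof this paper actually relies on: the paper's own ``proof'' is just the citation to \cite[Proposition~2.11]{AH_Local}, and the argument there rests on exactly your key observation that the $\BF$-Frobenius of a nilpotent thickening kills the ideal and hence factors through $\bar S$, so that a quasi-isogeny can be transported across the thickening via $\tau$ and $\tau'$. The difference is packaging: the cited proof is coordinate-free --- after reducing to the case where $\hat\sigma_S$ factors as $S\xrightarrow{\;\hat\sigma'\;}\bar S\xrightarrow{\;j\;}S$, the inverse map is written directly as $\bar f\mapsto \tau'\circ\hat\sigma'^{\,\ast}\bar f\circ\tau^{-1}$ --- whereas you first pass to \'etale-local trivializations and a square-zero d\'evissage and then produce the same formula in matrix form, $h=b'\,\widetilde{\hat\sigma}(\bar h)\,b^{-1}$, gluing by the injectivity you establish. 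Both routes work; the coordinate-free version spares you the sheaf/gluing bookkeeping and the d\'evissage (one can treat any locally nilpotent $\CI$ essentially in one step). One small repair to your write-up: \'etale-local sections of $LP\to\SpaceFl_\BP$ (Remark~\ref{RemFlagisquasiproj}) do not by themselves imply that arbitrary $LP$-torsors are \'etale-locally trivial. What you should invoke instead is that the underlying $L^+\BP$-torsors $\CL_+,\CL_+'$ are \'etale-locally trivial because $\BP$ is smooth affine (see \cite[Proposition~2.4]{AH_Local}, which is also what the paper uses elsewhere), and then the associated $LP$-torsors $\CL,\CL'$ are trivialized on the same cover --- which is all your computation needs; the rest of your argument (the identity $hb=b'\hat\sigma(h)$, injectivity from the factorization of $\hat\sigma$, the explicit lift, and gluing by uniqueness) is sound.
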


\begin{proof} See \cite[Proposition 2.11]{AH_Local}.
\end{proof}

\subsection{Local shtuka data and the corresponding Rapoport-Zink spaces}\label{SubSecLocalData}

\noindent
Fix an algebraic closure $\BaseOfD\dpl\zeta\dpr^\alg$ of $\BaseOfD\dpl\zeta\dpr$. For a finite extensions of discrete valuation rings $R/\BaseOfD\dbl\zeta\dbr$ with $R\subset\BaseOfD\dpl\zeta\dpr^\alg$, we denote by $\kappa_R$ its residue field, and we let $\Nilp_R$ be the category of $R$-schemes on which $\zeta$ is locally nilpotent. We also set $\wh{\SpaceFl}_{\BP,R}:=\SpaceFl_\BP\whtimes_{\BaseOfD}\Spf R$ and $\wh{\SpaceFl}_\BP:=\wh{\SpaceFl}_{\BP,\BaseOfD\dbl\zeta\dbr}$. Before we establish the assignment of a Rapoport-Zink space to a local $\nabla\scrH$-datum, we should recall that $\wh{\SpaceFl}_{\BP,R}$ can be viewed as an unbounded Rapoport-Zink space for local $\BP$-shtukas. We will explain this in Proposition \ref{PropUnbdRZFunctor} below. For now, let us consider the following functor
\begin{eqnarray}\label{eqPAFlag}
\ul\CM:&(\Nilp_R)^o &\longto  \Sets  \hspace{7cm}\vspace{-2mm}\\ \nonumber
&\SSS &\longmapsto  \big\{\text{Isomorphism classes of }(\CL_+,\delta);\text{where: }\\ \nonumber  
& &~~~~~~~~~~~~~~~~~~~~~~~~~~~~-\CL_+~\text{is an $L^+\BP$-torsor over $\SSS$ and}\\ \nonumber
& &  ~~~~~~~~~~~~~~~~~~~~~~~~~~~~ -\text{a trivialization $\delta\colon  \CL \to LP_S$ of the}\\ \nonumber
& & ~~~~~~~~~~~~~~~~~~~~~~~~~~~~~~~~~\text{associated loop torsors}
\big\}. 
\end{eqnarray}

\begin{proposition}\label{PropFlRepUnBoundedRZ}
The ind-scheme $\wh{\CF\ell}_{\BP,R}$ pro-represents the above functor.

\end{proposition}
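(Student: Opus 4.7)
\medskip

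The plan is to produce, for each test scheme $S\in\Nilp_R$, a natural bijection between $\wh{\CF\ell}_{\BP,R}(S)$ and $\ul\CM(S)$, functorial in $S$; this will give the claimed pro-representability, since $\wh{\CF\ell}_{\BP,R}=\SpaceFl_\BP\whtimes_\BaseOfD\Spf R$ is a formal ind-scheme and the condition $S\in\Nilp_R$ matches the passage from $\SpaceFl_\BP$ to $\wh{\CF\ell}_{\BP,R}$. The whole argument is the standard ``quotient-equals-pointed-torsor'' dictionary, once one takes $\SpaceFl_\BP$ as the fpqc-sheafification of $R\mapsto LP(R)/L^+\BP(R)$.

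\medskip

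For the forward direction $\ul\CM(S)\to \wh{\CF\ell}_{\BP,R}(S)$, start with a pair $(\CL_+,\delta)$ and choose an \'etale covering $\{S_i\to S\}$ together with trivializations $\alpha_i\colon L^+\BP_{S_i}\isoto \CL_+|_{S_i}$. Their extensions to the loop torsors $\alpha_i^{LP}\colon LP_{S_i}\isoto \CL|_{S_i}$, composed with $\delta|_{S_i}$, become right-$LP$-equivariant automorphisms of the trivial $LP$-torsor, hence are given by left multiplication by unique elements $g_i\in L\genericG(S_i)$. A change of trivialization $\alpha_i\leadsto \alpha_i\circ L_h$ for $h\in L^+\BP(S_i)$ replaces $g_i$ by $g_ih$, so the right cosets $[g_i]\in(LP/L^+\BP)(S_i)$ are canonical. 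They agree on overlaps and glue, by the fpqc-sheaf property defining $\SpaceFl_\BP$, to a section $S\to\SpaceFl_\BP$; since $\zeta$ is locally nilpotent on $S$ this section factors through $\wh{\CF\ell}_{\BP,R}$.

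\medskip

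For the backward direction, use the statement from Remark~\ref{RemFlagisquasiproj} that the quotient map $L\genericG\to\SpaceFl_\BP$ admits \'etale-local sections. Given $x\colon S\to \wh{\CF\ell}_{\BP,R}$, lift it \'etale-locally to elements $g_i\in L\genericG(S_i)$, which on overlaps differ by right multiplication by elements $h_{ij}\in L^+\BP(S_{ij})$ satisfying the cocycle condition. These transition data define an $L^+\BP$-torsor $\CL_+$ on $S$ by fpqc descent (which applies since $L^+\BP=\varprojlim_n \BP_n$ is an affine flat group scheme, where $\BP_n$ denotes the Weil restriction of $\BP$ along $\Spec\BaseOfD\dbl z\dbr/(z^n)\to\Spec\BaseOfD$), and the $g_i$'s assemble into an $L^+\BP$-equivariant map $\CL_+\to L\genericG_S$, equivalently a trivialization $\delta\colon\CL\isoto L\genericG_S$ of the induced loop torsor.

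\medskip

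The two constructions are inverse to each other \'etale-locally by direct inspection, and both are compatible with \'etale descent, hence inverse on all of $S$. Naturality in $S$ is immediate. I do not anticipate a genuine obstacle: the two non-formal inputs, namely fpqc descent for $L^+\BP$-torsors and the \'etale-local triviality of $L\genericG\to\SpaceFl_\BP$, are already available, and the left/right conventions for loop-group actions are the only point requiring mild care.
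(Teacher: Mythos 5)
Your proof is correct and follows essentially the same route as the paper's: the forward direction passes from a trivialization of $\CL_+$ to an element of $LP$ and hence to a well-defined point of $\wh{\CF\ell}_{\BP,R}$ by independence of the choice and descent, and the backward direction uses \'etale-local sections of $LP\to\CF\ell_\BP$ (Remark~\ref{RemFlagisquasiproj}) to produce a trivialized torsor that descends. Your version is a bit more explicit (e.g.\ it spells out fpqc descent for $L^+\BP$-torsors and the check that the two maps are inverse), but the key ideas coincide.
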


\begin{proof}
In order to illustrate how the representablity works, here we briefly sketch the proof, and we refer the reader to \cite[Theorem~4.4.]{AH_Local} for further details. We assume that $R=\BF\dbl\zeta\dbr$. Consider a pair $(\CL_+,\delta)\in \ul\CM(S)$. Choose an \fppf-covering $S' \to S$ which trivializes $\CL_+$, then the morphism $\delta$ is given by an element $g' \in LP(S')$. The image of the element $g' \in LP(S')$ under  $LP(S')\to\wh{\CF\ell}_\BP (S')$ is independent of the choice of the trivialization, and since $(\CL_+,\delta)$ is defined over $S$, it descends to a point $x \in \wh{\CF\ell}_\BP$. 

Conversely let $x$ be in $\wh{\CF\ell}_\BP(S)$, for a scheme $S\in \Nilp_{\BF\dbl\zeta\dbr}$. The projection morphism $LP \to \CF\ell_\BP$ admits local sections for the \'etale topology by \cite[Theorem~1.4]{PR2}. Hence over an \'etale covering $S' \to S$ the point $x$ can be represented by an element $g'\in LP(S')$. We let $(\CL_+',\delta')=((L^+\BP)_{S'},g')$. It can be shown that it descends and gives $(\CL_+,\delta)$ over $S$.
\end{proof}

\noindent
Here we recall the definition of local boundedness condition from \cite[Definition~4.8]{AH_Local}.

\begin{definition}\label{DefLBC}
\begin{enumerate}
\item\label{DefBDLocal_A}
 For a finite extension of discrete valuation rings $\BaseOfD\dbl\zeta\dbr\subset R\subset\BaseOfD\dpl\zeta\dpr^\alg$ we consider closed ind-subschemes $\hat{Z}_R\subset\wh{\SpaceFl}_{\BP,R}$. We call two closed ind-subschemes $\hat{Z}_R\subset\wh{\SpaceFl}_{\BP,R}$ and $\hat{Z}'_{R'}\subset\wh{\SpaceFl}_{\BP,R'}$ \emph{equivalent} if there is a finite extension of discrete valuation rings $\BaseOfD\dbl\zeta\dbr\subset\wt R\subset\BaseOfD\dpl\zeta\dpr^\alg$ containing $R$ and $R'$ such that $\hat{Z}_R\whtimes_{\Spf R}\Spf\wt R \,=\,\hat{Z}'_{R'}\whtimes_{\Spf R'}\Spf\wt R$ as closed ind-subschemes of $\wh{\SpaceFl}_{\BP,\wt R}$.
\item\label{DefBDLocal_C}
We define a (local) \emph{bound} to be an equivalence class $\hat{Z}:=[\hat{Z}_R]$ of closed ind-subschemes $\hat{Z}_R\subset\wh{\SpaceFl}_{\BP,R}$, such that 
\begin{enumerate}
\item all the ind-subschemes $\hat{Z}_R$ are stable under the left $L^+\BP$-action on $\SpaceFl_\BP$ and
\item
 the special fibers $Z_R:=\hat{Z}_R\whtimes_{\Spf R}\Spec\kappa_R$ are quasi-compact subschemes of the ind scheme $\SpaceFl_\BP\whtimes_{\BaseOfD}\Spec\kappa_R$. 
\end{enumerate}
\item\label{DefBDLocal_B} Let $\hat{Z}=[\hat{Z}_R]$ be an equivalence class in the above sense. The \emph{reflex ring} $R_{\hat{Z}}$ is defined as the intersection of the fixed field of $\{\gamma\in\Aut_{\BaseOfD\dbl\zeta\dbr}(\BaseOfD\dpl\zeta\dpr^\alg)\colon \gamma({\hat{Z}})=\hat{Z}\,\}$ in $\BaseOfD\dpl\zeta\dpr^\alg$ with all the finite extensions $R\subset\BaseOfD\dpl\zeta\dpr^\alg$ of $\BaseOfD\dbl\zeta\dbr$ over which a representative $\hat{Z}_R$ of $\hat{Z}$ exists. 
 
\item \label{DefBDLocal_D}
Let $\hat{Z}$ be a bound with reflex ring $R_{\hat{Z}}$. Let $\CL_+$ and $\CL_+'$ be $L^+\BP$-torsors over a scheme $S$ in $\Nilp_{R_{\hat{Z}}}$ and let $\delta\colon \CL\isoto\CL'$ be an isomorphism of the associated $LP$-torsors. We consider an \'etale covering $S'\to S$ over which trivializations $\alpha\colon\CL_+\isoto(L^+\BP)_{S'}$ and $\alpha'\colon\CL_+'\isoto(L^+\BP)_{S'}$ exist. Then the automorphism $\alpha'\circ\delta\circ\alpha^{-1}$ of $(LG)_{S'}$ corresponds to a morphism $S'\to LG\whtimes_\BaseOfD\Spf R_{\hat{Z}}$. We say that $\delta$ is \emph{bounded by $\hat{Z}$} if for any such trivialization and for all finite extensions $R$ of $\BaseOfD\dbl\zeta\dbr$ over which a representative $\hat{Z}_R$ of $\hat{Z}$ exists the induced morphism $S'\whtimes_{R_{\hat{Z}}}\Spf R\to LP\whtimes_\BaseOfD\Spf R\to \wh{\SpaceFl}_{\BP,R}$ factors through $\hat{Z}_R$. Furthermore we say that a local $\BP$-shtuka $(\CL, \tauLoc)$ is \emph{bounded by $\hat{Z}$} if the isomorphism $\tauLoc^{-1}$ is bounded by ${\hat{Z}}$. Assume that ${\hat{Z}}=\CS(\omega)\whtimes_\BaseOfD\Spf \BaseOfD\dbl\zeta\dbr$ for a \emph{Schubert variety} $\CS(\omega)\subseteq \CF\ell_\BP$ , with $\omega\in \wt{W}$; see \cite{PR2}. Then we say that $\delta$ is \emph{bounded by $\omega$}.

\end{enumerate}
\end{definition}

\begin{remark}\label{RemBCSpecialFiber}
 Note that in the part b) of the above Definition one can observe that $Z_R$ arise by base change from a unique closed subscheme $Z\subset\SpaceFl_\BP\whtimes_\BaseOfD\kappa_{R_{\hat{Z}}}$. This is because the Galois descent for closed subschemes of $\SpaceFl_\BP$ is effective. We call $Z$ the \emph{special fiber} of the bound ${\hat{Z}}$. Note that when $\BP$ is parahoric, it is a projective scheme over $\kappa_{R_{\hat{Z}}}$ by~\cite[Remark 4.3]{AH_Local} and \cite[Lemma~5.4]{H-V}.
 
\end{remark}

\begin{remark}
Note that the condition ii) of the part (b) of the above definition implies that the $\hat{Z}_R$ are formal schemes in the sense of \cite[I$_{\rm new},~ $\S$~10$]{EGA}; see \cite[Remark 4.10]{AH_Local}.
\end{remark}

\begin{remark}
Note that the boundedness condition stated in part \ref{DefBDLocal_D} of the above definition is satisfied for all trivializations and for all such finite extensions $R$ of $\BF_q\dbl\zeta\dbr$ if and only if it is satisfied for one trivialization and for one such finite extension. Namely, first observe that by the $L^
+\BP$-invariance of $\hat{Z}$ the definition is independent of the trivializations. For the fact that
one such extension suffices, see \cite[Remark 4.6]{AH_Local}
\end{remark}

\begin{remark}\label{RemNablaHData}
In our analogous picture over function fields, the Shimura datum $(\BG,X,K)$, would be replaced by tuples $(\FG, \ul{\hat{Z}}, H)$, which we call \emph{$\nabla\scrH$-datum}. A $\nabla\scrH$-datum $(\FG, \ul{\hat{Z}}, H)$ consists of a smooth affine group scheme $\FG$ over a smooth projective curve $C$ over $\BF_q$, an $n$-tuple of (local) bounds $\ul{\hat{Z}}:=({\hat{Z}}_{\nu_i})_{i=1\dots n}$, in the sense of Definition \ref{DefLBC}, at the fixed characteristic places $\nu_i\in C$ and a compact open subgroup $H\subseteq \FG(\BA_C^\ul\nu)$. A morphism $(\FG, \ul{\hat{Z}}, H)\to(\FG', \ul{\hat{Z}}', H')$  between two $\nabla\scrH$-data is a morphism $\rho: \FG\to\FG'$ such that 
\begin{enumerate}

\item
the inclusion ${\hat{Z}}_{\nu_i}\to\wh{\CF\ell}_{\BP_{\nu_i}}$ followed by the induced morphism $\wh{\CF\ell}_{\BP_{\nu_i,R}}\to\wh{\CF\ell}_{\BP_{\nu_i,R}'}$ factors through $\hat{Z}_{\nu_i,R}'$, for a dvr $R\supseteq \BF\dbl \zeta \dbr$ and 
\item
the image of $H$ under the induced morphism $\FG(\BA_C^\ul\nu)\to\FG(\BA_C^\ul\nu)$ lies in $H'$.
\end{enumerate}
To such a datum we associate a moduli stack $\nabla_n^{H,\ul{\hat{Z}}}\scrH^1(C,\FG)^\ul\nu$,  parametrizing global $\FG$-shtukas with level $H$-structure which are in addition bounded by $\ul{\hat{Z}}$, see \cite{AH_Global}, in a functorial way;  see \cite{Paul}. Note that one can define $\nabla\scrH$-data more generally. Namely, replacing the $n$-tuple of (local) bounds $\ul{\hat{Z}}:=({\hat{Z}}_{\nu_i})_{i=1\dots n}$ by a global boundedness condition, allows to define the corresponding moduli stacks over the n-fold product of the reflex curve; see \cite[Definition 3.1.3]{AH_LM}.

\end{remark}

In analogy with the Shimura variety side we define

\begin{definition}\label{DefLocalNablaHdata}
\emph{A local $\nabla\scrH$-datum} is a tuple $(\BP,\hat{Z}, b)$ consisting of\\[1mm]
- a smooth affine group scheme $\BP$ over $\BD$ with connected reductive generic fiber $P$,\\[0.5mm]
- a local bound $\hat{Z}$ in the sense of Definition \ref{DefLBC}, and\\[0.5mm]
- a $\sigma$-conjuagacy class of an element $b\in P(\ol\BF \dpl z \dpr)$. 
\end{definition} 
 
To a local $\nabla\scrH$-datum $(\BP,\hat{Z}, b)$ one may associate a formal scheme $\breve{\CM}(\BP,\hat{Z}, b)$ which is a moduli space for local $\BP$-shtukas together with a quasi-isogeny to a fixed local $\BP$-shtuka $\ul\BL$, determined by the local $\nabla\scrH$-datum. In analogy with number fields, they are called Rapoport-Zink spaces (for local $\BP$-shtukas). These moduli spaces were first introduced and studied in \cite{H-V} for the case where $\BP$ is a constant split reductive group over $\BD$, and then generalized to the case where $\BP$ is a smooth affine group scheme over $\BD$ with connected reductive generic fiber in \cite{AH_Local}. Here we briefly recall the construction of these formal schemes.

Let $\ul\BL$ be a local $\BP$-shtuka over $\BF$. The bound $\hat{Z}$ determines the reflex ring $R_{\hat{Z}}$. Consider the following functor

\begin{eqnarray}\label{eqUBRZ}
\ul{\breve{\CM}}_{\ul\BL}:&(\Nilp_{R_{\hat{Z}}})^o &\longto  \Sets  \hspace{7cm}\vspace{-2mm}\\ \nonumber
&\SSS &\longmapsto  \big\{\text{Isomorphism classes of }(\ul\CL,\delta); \text{where: }\\ \nonumber
& &~~~~~~~~~~~~~~~~~~~~~~~~~~~~-\ul\CL~\text{is a local $\BP$-shtuka over $\SSS$ and}\\ \nonumber
& &  ~~~~~~~~~~~~~~~~~~~~~~~~~~~~ -\text{$\ol\delta\colon  \ul\CL_{\ol S} \to \ul\BL_\ol S$ is a quasi-isogeny}
\big\}.  
\end{eqnarray}
\noindent
Here $\ol S$ is the closed subscheme of $S$ defined by $\zeta=0$.

\begin{proposition}\label{PropUnbdRZFunctor}
For a trivialized local $\BP$-shtuka $\ul\BL$ the above functor $\ul{\breve{\CM}}_{\ul\BL}$ is pro-representable by $\wh\CF\ell_{\BP, R_{\hat{Z}}}$. 
\end{proposition}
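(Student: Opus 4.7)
The plan is to reduce the statement to Proposition~\ref{PropFlRepUnBoundedRZ}, which pro-represents the ``trivialized'' functor of $L^+\BP$-torsors equipped with a loop-torsor trivialization, by invoking the rigidity of quasi-isogenies for local $\BP$-shtukas.

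First, I would apply Proposition~\ref{PropRigidityLocal}. For $S\in\Nilp_{\check R_{\hat Z}}$ the closed immersion $j\colon\bar S\into S$ is defined by the locally nilpotent ideal $(\zeta)$. Rigidity therefore gives a bijection
$$
\QIsog_S(\ul\CL,\ul\BL_S)\;\isoto\;\QIsog_{\bar S}(\ul\CL_{\bar S},\ul\BL_{\bar S}),
$$
so the datum of a quasi-isogeny $\bar\delta$ over $\bar S$ is equivalent to the datum of its unique lift $\delta\colon\ul\CL\to\ul\BL_S$ over all of $S$. Replacing $\bar\delta$ by $\delta$ gives a functorially equivalent reformulation of $\ul{\check\CM}_{\ul\BL}$.

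Next I would eliminate the Frobenius $\tau$ from the data. A quasi-isogeny $\delta\colon\ul\CL\to\ul\BL_S$ is an isomorphism $\delta\colon\CL\to\BL_S$ of the associated $LP$-torsors making the Frobenius square commute, so $\tau$ is forced to equal $\delta^{-1}\circ\tau_\BL\circ\hat\sigma^*\delta$. Conversely, any pair $(\CL_+,\delta)$ with $\delta\colon\CL\to\BL_S$ an isomorphism of loop torsors defines a unique local $\BP$-shtuka structure $\tau$ on $\CL_+$ together with a quasi-isogeny to $\ul\BL_S$. Thus $\ul{\check\CM}_{\ul\BL}(S)$ is naturally in bijection with isomorphism classes of such pairs $(\CL_+,\delta)$.

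Finally, I would use the freedom to choose a representative for the $\sigma$-conjugacy class $[b]$: take $\ul\BL=\bigl((L^+\BP)_\BF,\,b\cdot\hat\sigma\bigr)$, so that $\BL_+$ is the trivial $L^+\BP$-torsor and $\BL_S=(LP)_S$ canonically. Under this identification $\delta\colon\CL\to\BL_S$ becomes precisely a trivialization of the loop torsor $\CL$, and our functor becomes exactly the one of Proposition~\ref{PropFlRepUnBoundedRZ}, pro-represented by $\wh\CF\ell_{\BP,R_{\hat Z}}$.

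The only technical care needed, rather than a real obstacle, is checking naturality: that rigidity, the passage $\delta\mapsto\tau$, and the base change of the fixed trivialization of $\BL_+$ are all compatible with pullback along morphisms $S'\to S$ in $\Nilp_{\check R_{\hat Z}}$, and with the \'etale/$\fpqc$ descent arguments used in the proof of Proposition~\ref{PropFlRepUnBoundedRZ}. Each of these compatibilities is formal once the relevant equivalences are set up, so the core content of the proof is truly just rigidity together with Proposition~\ref{PropFlRepUnBoundedRZ}.
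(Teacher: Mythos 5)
Your proposal is correct and is essentially a detailed spelling-out of the paper's one-line proof, which simply cites Proposition~\ref{PropFlRepUnBoundedRZ} and rigidity (Proposition~\ref{PropRigidityLocal}): rigidity lifts $\bar\delta$ uniquely from $\bar S$ to $S$, the lifted $\delta$ determines $\tau$ via $\tau=\delta^{-1}\circ\tau_\BL\circ\hat\sigma^*\delta$, and with $\BL_+$ trivial the data $(\CL_+,\delta)$ is exactly that of Proposition~\ref{PropFlRepUnBoundedRZ}. One small remark on your last step: you invoke the freedom to choose $b$ so that $\BL_+$ is the trivial torsor, but in fact over the finite field $\BF$ (with $\BP$ having connected fibers) every $L^+\BP$-torsor is trivial by Lang's theorem, and in any case the paper's assignment~\ref{eqAssignRZToLD} explicitly fixes $\ul\BL=(L^+\BP,b\hat\sigma)$, so nothing further needs to be arranged there.
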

 
\begin{proof}
By rigidity of quasi-isogenies, Proposition \ref{PropRigidityLocal}, the quasi-isogeny $\ol\delta\colon  \ul\CL_{\ol S} \to \ul\BL_\ol S$ lifts to a unique quasi-isogeny $\delta: \ul\CL:=(\CL_+,\delta) \to \ul\BL_S$ over $S$, which in particular gives the isomorphism $\delta: \CL\to LP_S$, vice versa an isomorphism $\delta: \CL\to LP_S$ of torsors induces a unique quasi-isogeny $\delta: \ul\CL_\ol S\to \ul\BL_\ol S$. This obviously gives a natural isomorphism of the functors \ref{eqUBRZ} and \ref{eqPAFlag}, and thus the proposition follows from Proposition \ref{PropFlRepUnBoundedRZ}.   
\end{proof} 

\noindent
Consider the following sub-functor of $\ul{\breve{\CM}}_{\ul\BL}$.
 
\noindent
\begin{definition}\label{DefRZForLocPSht}
 Let $\hat{Z}=[\hat{Z}_R]$ be a bound with reflex field $R_{\hat{Z}}$. Define \emph{the Rapoport-Zink space for (bounded) local $\BP$-shtukas}, as the space given by the following functor of points
\begin{eqnarray*}
\ul{\breve{\CM}}_{\ul\BL}^{\hat{Z}}:&(\Nilp_{R_{\hat{Z}}})^o &\longto  \Sets  \hspace{7cm}\vspace{-2mm}\\
&\SSS &\longmapsto  \big\{\text{Isomorphism classes of }(\ul\CL,\delta)\colon\;\text{where: }\\
& &~~~~~~~~~~~~~~~~~~~~~~~~~~~~-\ul\CL~\text{is a local $\BP$-shtuka}\\
& & ~~~~~~~~~~~~~~~~~~~~~~~~~~~~~~~~\text{ over $\SSS$ bounded by ${\hat{Z}}$ and}\\ 
& &  ~~~~~~~~~~~~~~~~~~~~~~~~~~~~ -\text{$\ol\delta\colon  \ul\CL_{\ol S} \to \ul\BL_\ol S$ a quasi-isogeny}
\big\}. 
\end{eqnarray*}

Note that the datum $(\BP,{\hat{Z}}, b)$ determines the reflex ring $R_{\hat{Z}}$, see Definition \ref{DefLBC}, and a local $\BP$-shtuka $\ul\BL:=(L^+\BP,b\hat{\sigma})$. Thus we may establish the following
\begin{equation} \label{eqAssignRZToLD}
(\BP,{\hat{Z}}, b)\mapsto \breve{\CM}(\BP,{\hat{Z}}, b),
\end{equation}
which assigns the Rapoport-Zink space $\breve{\CM}(\BP,{\hat{Z}}, b):=\ul{\breve{\CM}}_{\ul\BL}^{\hat{Z}}$ to the local $\nabla\scrH$-datum $(\BP,{\hat{Z}}, b)$. 

\end{definition}

The following theorem ensures the representability of the above functor by a formal scheme locally formally of finite type.

\begin{theorem} \label{ThmRRZSp}
In the above situation if $\BP$ is a smooth affine group scheme over $\BD$ with connected reductive generic fiber, the functor 
$\ul{\breve{\CM}}_{\ul\BL}^{\hat{Z}}$ is ind-representable by a formal scheme $\breve{\CM}_{\ul\BL}^{\hat{Z}}$ over $\Spf \BaseFldOfLocSht\dbl\xi\dbr$ which is locally formally of finite type and separated. It is called a \emph{bounded Rapoport-Zink space for local $\BP$-shtukas}. Its underlying reduced subscheme equals the associated \emph{affine Deligne--Lusztig variety}, which is the reduced closed ind-subscheme $X_Z(\ul\BL)\subset\SpaceFl_\BP\whtimes_\BaseOfD\Spec\BaseFldOfLocSht$ whose $K$-valued points (for any field extension $K$ of $\BaseFldOfLocSht$) are given by
\begin{equation}
\label{EqADLV}
X_Z(\ul\BL)(K)\;:=\;X_Z(b)(K)\;:=\;\big\{ g\in \SpaceFl_\BP(K)\colon g^{-1}\,b\,\hat\sigma^\ast(g) \in Z(K)\big\}.
\end{equation}
In particular $X_Z(\ul\BL)$ is a scheme locally of finite type over $\BaseFldOfLocSht$. Its irreducible components are quasi-projective schemes over $k$. Moreover, they are
projective if $\BP$ is parahoric in the sense of Bruhat and Tits (in the sense of \cite[D\'efinition 5.2.6]{B-T}).
\end{theorem}

\begin{proof}

See Theorem~4.18 and Corollary~4.26 of \cite{AH_Local}.

\end{proof}

\begin{remark}\label{RemJActsOnRZ}
The group $\QIsog_{\BaseFldOfLocSht}(\ul\BL)$ of quasi-isogenies of $\ul\BL$ acts on $\breve{\CM}_{\ul\BL}^{\hat{Z}}$ via $j\colon(\ul\CL,\bar\delta)\mapsto(\ul\CL,j\circ\bar\delta)$. When $\ul\BL=\bigl((L^+\BP)_\BaseFldOfLocSht,b\hat{\sigma}^*\bigr)$ is trivialized and decent, $\QIsog_{\BaseFldOfLocSht}(\ul\BL)=J_b(\BaseOfD\dpl z\dpr)$ where $J_b$ is the connected algebraic group over $\BaseOfD\dpl z\dpr$ which is defined by its functor of points that assigns to an $\BaseOfD\dpl z\dpr$-algebra $R$ the group
\begin{equation}\label{EqGroupJ}
J_b(R):=\bigl\{\,j \in \genericG(R\otimes_{\BaseOfD\dpl z\dpr} {\BaseFldOfLocSht\dpl z\dpr})\colon j^{-1}b\hat{\sigma}(j)=b\,\bigr\}\,,
\end{equation}
see \cite[\RemJb]{AH_Local}.
\end{remark}

\section{The local model theorem}\label{SecLMT}

In \cite{AH_LM} the authors proved two versions of local model theorem for moduli stack $\nabla_n^{H,\ul{\hat{Z}}}\scrH^1(C,\FG)^\ul\nu$ of global $\FG$-shtukas corresponding to $\nabla\scrH$-datum $(\FG, \ul{\hat{Z}}, H)$; see remark \ref{RemNablaHData} and \cite[Theorem 4.4.6 and Theorem 3.2.1]{AH_LM}. See also \cite[Theorem 2.20]{Var} for constant case, i.e. when $\FG=G\times_{\BF_q}C$ for a split reductive group $G$ over $\BF_q$. These theorems ensure that the local geometry of $\nabla_n^{H,\ul{\hat{Z}}}\scrH^1(C,\FG)^\ul\nu$ can be completely described by the corresponding boundedness conditions. Below we formulate and prove the local model theorem for Rapoport-Zink spaces for local $\BP$-shtukas.

\begin{theorem}\label{ThmRapoportZinkLocalModel}

Consider the assignment \ref{eqAssignRZToLD}. To a local $\nabla\scrH$-datum $(\BP,{\hat{Z}},b)$ one can assign a roof

\begin{equation}\label{EqnablaHRoof} 
\xygraph{
!{<0cm,0cm>;<1cm,0cm>:<0cm,1cm>::}
!{(0,0) }*+{\wt{\CM}_{\ul\BL}^{\hat{Z}}}="a"
!{(-1.5,-1.5) }*+{\breve{\CM}}="b"
!{(1.5,-1.5) }*+{{\hat{Z}} ,}="c"
"a":_{\pi}"b" "a":^{\pi^{loc}}"c"
}  
\end{equation}
\noindent
where $\breve{\CM}:=\breve{\CM}(\BP,{\hat{Z}},b):=\breve{\CM}_{\ul\BL}^{\hat{Z}}$, that satisfies the following properties

\begin{enumerate}
\item
the morphism $\pi^{loc}$ is formally smooth and
\item
$\wt{\CM}_{\ul\BL}^{\hat{Z}}$ is an $L^+\BP$-torsor under $\pi: \wt{\CM}_{\ul\BL}^{\hat{Z}}\to \breve{\CM}$. It admits a section $s'$ locally for the \'etale topology on $\breve{\CM}$ such that $\pi^{loc}\circ s'$ is formally \'etale.
\end{enumerate}

\end{theorem}

\begin{proof} 

Define $\wt{\CM}_{\ul\BL}^{\hat{Z}}$ to be the space associated to the following functor of points

\begin{eqnarray}\label{EqRecBd}
\wt{\CM}_{\ul\BL}^{\hat{Z}}:&(\Nilp_{R_{\hat{Z}}})^o &\longto \Sets  \hspace{7cm}\vspace{-2mm}\nonumber\\
&\SSS &\longmapsto \big\{ (\ul\CL:=(\CL_+,\tau),\delta, \gamma); \text{consisting of}\\ & & ~~~~~~~~~~~~~~~~~~~~~~~~~~~~-(\ul\CL,\delta) \in \breve{\CM}_{\ul\BL}^{\hat{Z}} \text{and}\nonumber\\
& & ~~~~~~~~~~~~~~~~~~~~~~~~~~~~-\text{ a trivialization}~\gamma:\hat{\sigma}^\ast\CL_+\tilde{\to}L^+\BP
\big\}.\nonumber
\end{eqnarray}
\noindent

Sending the tuple $(\ul\CL:=(\CL_+, \tau_{\CL}),\delta,\gamma)$ to $(\CL_+, \gamma\circ \tau_{\CL}^{-1})$ defines a map ${\wt{\CM}_{\ul\BL}^{\hat{Z}}} \to \wh{\CF\ell}_{G,R_{\hat{Z}}}$. As the local $\BP$-shtuka $\ul\CL$ is bounded by $\hat{Z}$, this morphism factors through ${\hat{Z}}$; see Definition \ref{EqRecBd}. This defines the map $\pi^{loc}:{\wt{\CM}_{\ul\BL}^{\hat{Z}}} \to {\hat{Z}}$.\\
Take a closed immersion $i: S_0\to S$ defined by a nilpotent sheaf of ideals $I$. Since $I$ is nilpotent, there is a morphism $j:S\to S_0$ such that the $q$-Frobenius $\sigma_S$ factors as follows

\[
\xymatrix {
S\ar[r]^j \ar@/^2pc/[rr]^{\sigma_S} & S_0 \ar[r]^i& S.
}
\]
\noindent
Let $(\CL_{0+}, \tau_{\CL_0},\delta_0,\gamma_0)$ be a point in $\wt{\CM}_{\ul\BL}^{\hat{Z}}(S_0)$ and assume that it maps to $(\CL_{0+},g_0)$ under $\pi^{loc}$. Furthermore assume that $(\CL_+,g:\CL\tilde{\to} (LP)_S)$  lifts $(\CL_{0+},g_0)$ over $S$, i.e. $i^\ast \CL=\CL_0$ and $i^\ast g= g_0= \gamma_0\circ \tau_{\CL_0}^{-1}$. \\
\noindent
Consider the following diagram

\[
\xymatrix {
S_0\ar[rr]^{(\CL_{0+},\tau_{\CL_0},\delta_0,\gamma_0)}\ar[d]_i & & \wt{\CM}_{\ul\BL}^{\hat{Z}}\ar[d]^{\pi^{loc}}  \\
S\ar[rr]_{(\CL_+,g)}\ar@{.>}[urr]^{\alpha}& & {\hat{Z}} \;.
}
\]

To prove $a)$ we have to verify that there is a map $\alpha$ that fits in the above  commutative diagram.

\noindent
We construct $\alpha : S\to\wt{\CM}_{\ul\BL}^{\hat{Z}}$ in the following way. First we take a lift $\gamma:\sigma_S^\ast \CL_+\tilde{\to} (L^+\BP)_S$ of $\gamma_0:\sigma_{S_0}^\ast \CL_{0+}\tilde{\to} (L^+\BP)_{S_0} $. To see the existence of such lift one can proceed as in  \cite[Proposition~2.2.c)]{H-V}. Namely, regarding the smoothness of $\BP$, one first observes that if a torsor gets mapped to the trivial torsor under $$\check{H}^1(S_{\text{\'et}},L^+\BP)\to\check{H}^1(S_{0, \text{\'et}},L^+\BP),$$ it must initially be a trivial one. Consider an $L^+\BP$-torsor $\CL_+$ over $S$. It can be represented by trivializing cover $S'\to S$ and an element $h''\in L^+\BP(S'')$, where $S''=S'\times_S S'$. A given trivialization $\gamma_0$ of $\CL_+$ over $S_0$ is given by $g_0' \in L^+\BP (S_0')$ with $p_2^\ast(g_0')p_1^\ast(g_0')^{-1}=h_0''$, where $h_0''$ is the image of $h''$ under $L^+\BP(S'')\to L^+\BP(S_0'')$ and $p_i: S''\to S'$ denotes the projection to the $i$'th factor, $i=1,2$. Take a trivialization $\beta$ of $\CL_+$, given by $f'\in L^+\BP(S')$ with $p_2^\ast(f')p_1^\ast(f')^{-1}=h''$. We modify it in the following way. Let $f_0'$ be the restriction of $f'$ to $S_0$. We have $p_2^\ast(f_0'^{-1}g_0')=p_1^\ast(f_0'^{-1}g_0')$ and therefore $f_0'^{-1}g_0'$ induces $t_0 \in L^+\BP(S_0)$. By smoothness of $\BP$ this section lifts to $t\in L^+\BP(S)$. The element $g':= t \cdot f'$ lifts $g_0'$ and satisfies $p_2^\ast(g')p_1^\ast(g')^{-1}=h''$, thus induces the desired trivialization $\gamma$. This ensures the existence of the lift $\gamma$. \\
The morphism $\alpha$ is given by the following tuple
$$
(\CL_+,\tau_\CL,\delta,\gamma):=(\CL_+,g^{-1}\circ \gamma,\tau_{\BL}\circ j^\ast \delta_0\circ\gamma^{-1}\circ g, \gamma).
$$

\noindent
Notice that

\begin{eqnarray*}
i^\ast(\CL_+,\tau_\CL,\delta,\gamma)&=&i^\ast (\CL_+,g^{-1}\circ \gamma,\tau_{\BL}\circ j^\ast \delta_0\circ\gamma^{-1}\circ g, \gamma)\\
&=&(\CL_{0+},i^\ast g^{-1}\circ\gamma_0, \tau_{\BL}\circ\sigma^\ast\delta_0\circ\tau_{\CL_0}^{-1},\gamma_0)\\
&=&(\CL_{0+},\tau_{\CL_0},\delta_0,\gamma_0)
\end{eqnarray*}

\noindent
and that $\pi^{loc}(\CL_+,\tau_\CL,\delta,\gamma)=(\CL,g)$.

\noindent
Now we prove part b). We take an \'etale covering $\CM'\to \breve{\CM}_{\ul\BL}^{\hat{Z}}$ such that the universal $L^+\BP$-torsor  $\CL_+^{univ}$ admits a trivialization $\gamma': \CL_{+,\CM'}^{univ}\tilde{\to}(L^+\BP)_{\CM'}$. For existence of such trivializing covering see \cite[Proposition 2.4]{AH_Local}. This yields the section

\begin{equation}\label{EqM'}
~~~\xymatrix {
& & \wt{\CM}_{\ul\BL}^{\hat{Z}} \ar[dl]_{\pi}\ar[dr]^{\pi^{loc}} &  \\
\CM'\ar[r]\ar@/^2pc/[urr]^{s'}&\breve{\CM}_{\ul\BL}^{\hat{Z}}& &\hat{Z}  \;.
}
\end{equation}

\noindent
corresponding to the tuple $(\CL_+^{univ},\delta,\sigma^\ast \gamma')$.
\noindent
Consider the following diagram

\[
\xymatrix {
S_0\ar[rr]^{(\CL_{0+},\tau_{\CL_0},\delta_0,\gamma_0')}\ar[d]_i & & \CM'\ar[d]^{\pi^{loc}\circ s'}  \\
S\ar[rr]_{(\CL_+,g)}\ar@{.>}[urr]^{\alpha'}& &\hat{Z} \;.
}
\]
\noindent
We want to find $(\CL_+,\tau_{\CL},\delta,\gamma')$ with $g=\sigma^\ast \gamma'\tau_\CL^{-1}.$
First we construct
$$
(\CL_+,\tau_\CL,\delta,\gamma)\in \wt{\CM}_{\ul\BL}^{\hat{Z}}(S).
$$
Since $\sigma^\ast\gamma'=j^\ast i^\ast\gamma'=j^\ast\gamma_0'$, we take $\gamma:=j^\ast\gamma_0'$. This gives the morphism $\delta$ according to the following commutative diagram

\[
\xymatrix {
\BL & & \CL\ar[ll] \ar[rd]^{g}  \\
\sigma^\ast\BL \ar[u]_{\tau_\BL}& &\sigma^\ast\CL\ar[u]\ar[ll]^{j^\ast \delta_0} \ar[r]_{j^\ast \gamma_0'}^{\sim} & (LP)_S \;.
}
\]

\noindent
and furthermore determines $\tau_\CL$, we set 
$$
y:=\left(\CL_+,g^{-1}j^\ast\gamma_0',\tau_{\BL}\circ j^\ast \delta_0 \circ j^\ast\gamma_0'^{-1}\circ g,j^\ast \gamma_0'\right)\in 
\wt{\CM}_{\ul\BL}^{\hat{Z}}(S)
$$
with $\pi^{loc}(y)=(\CL_+, g)\in \hat{Z}(S)$. The section $s'$ sends $(\CL_{0+},\tau_{\CL_0},\delta_0,\gamma_0')$ to 
$$
(\CL_{0+},\tau_{\CL_0},\delta_0,\gamma_0=i^\ast j^\ast \gamma_0'=\sigma_{S_0}^\ast \gamma_0')=i^\ast y\in\wt{\CM}_{\ul\BL}^{\hat{Z}}(S_0).
$$
\noindent
Consider the point $$\pi(y)=(\CL_+,\tau_\CL,\delta)\in \breve{\CM}_{\ul\BL}^{\hat{Z}}(S)$$ with $i^\ast \pi(y)= (\CL_{0+},\tau_{\CL_0},\delta_0)$. Then, since $\CM'\to \breve{\CM}_{\ul\BL}^{\hat{Z}}$ is \'etale, there is a unique $\gamma':\CL\tilde{\to}(L^+\BP)_S$ with $i^\ast\gamma'=\gamma_0'$. Note that $$\gamma:=\sigma^\ast \gamma'=j^\ast i^\ast \gamma'=j^\ast \gamma'.$$
\noindent
This ensures the existence of $\alpha'$ which is given by $(\CL_+,\tau_\CL,\delta,\gamma')$.
To see the uniqueness let $(\CL_+,\tau_\CL,\delta,\gamma')\in\CM'(S)$ with $i^\ast(\CL_+,\tau_\CL,\delta,\gamma')=(\CL_0,\tau_{\CL_{0+}},\delta_0,\gamma'_0)$ and

$$
\pi^{loc}(\CL_+,\tau_\CL,\delta,\gamma'):=(\CL_+,\sigma^\ast \gamma' \tau_{\CL}^{-1})=(\CL,g).
$$

\noindent
Therefore $\CL_+$, $\tau_{\CL}=g^{-1} \sigma^\ast \gamma' = g^{-1} j^\ast \gamma_0'$ and $\delta=\tau_\BL \circ j^\ast\delta_0\circ j^\ast\gamma_0'^{-1}\circ g$ are uniquely determined and provide a point in $\breve{\CM}_{\ul\BL}^{\hat{Z}}(S)$ and then $\gamma'$ is also uniquely determined.
\end{proof}


\section{Applications}\label{Applications}
Below in subsections \ref{SubSectLocal properties of R-Z spaces} and \ref{SubSectRelToADLV}, we discuss some immediate consequences of the local model theorem \ref{ThmRapoportZinkLocalModel}. Then in subsection \ref{Subsect Semi-simple trace and Frob}, using the local model theory, the theory of formal nearby cycles \cite{Mieda} together with the uniformization theory of the stack of global $\FG$-shtkas \cite{AH_Global}, we study the nearby cycles cohomology of these moduli spaces.

\subsection{Local properties of R-Z spaces}\label{SubSectLocal properties of R-Z spaces}

As we will see below, the local model theorem has an immediate corollary, that describes the local geometry of Rapoport-Zink spaces, compare \cite[Proposition 4.5.2]{AH_LM}.  Moreover, in a similar way to the local model theory for moduli stacks of global $\FG$-shtukas, see \cite[Proposition 4.5.3]{AH_LM} (and also \cite[Theorem 3.21]{AH_MR}), it also has some global consequences, which we partly explain below.

\begin{definition}\label{DefPsatisfiesSerreCondition}

Consider the Serre conditions $S_i$ and $R_i$ in the sense of \cite[IV, \S 5.7 and 5.8]{EGA}. 
We say that a group $\BP$ satisfies $SS_i$ (resp. $SR_i$) if all singularities occurring in the Schubert varieties, i.e. closures of the orbits under the $L^+\BP$-action on $\CF\ell_\BP$, satisfy  $S_i$ (resp. $R_i$). Similarly we say that $\BP$ is S-CM (resp. S-N) if all singularities occurring in the orbit closures of the orbits under the $L^+\BP$-action are Cohen-Macaulay (resp. normal).
\end{definition}

\begin{remark}\label{RemparahoricPis}
A parahoric group $\BP$ with tame generic fiber $P$ satisfies $SS_i$ for all $i$, as well as $SR_0$ and $SR_1$, see \cite[Theorem~8.4]{PR2}, according to Serre's criterion for normality.
\end{remark}

\begin{corollary}\label{PropLocModel}
We have the following statements:\\
\begin{enumerate}

\item[a)]
The Rapoport-Zink space $\breve{\CM}_{\ul\BL}^{\hat{Z}}$ satisfies $S_i$ (resp. $R_i$) if $\hat{Z}$ satisfies $S_i$ (resp. $R_i$). In particular $\breve{\CM}_{\ul\BL}^{\hat{Z}}$ satisfies $S_i$ if $\BP$ satisfies $SS_i$ and $\zeta$ is not a zero divisor in $\CO_{\hat{Z}}$. 

\item[b)]
The Rapoport-Zink space $\breve{\CM}_{\ul\BL}^{\hat{Z}}$ is flat over its reflex ring $R_{\hat{Z}}$ iff $\hat{Z}$ is flat. The latter is the case when $\BP$ is S-CM and $\zeta$ is not a zero divisor in $\CO_{\hat{Z}}$.

\end{enumerate}
\end{corollary}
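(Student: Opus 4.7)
The approach will be to exploit the local model roof of Theorem \ref{ThmRapoportZinkLocalModel} to reduce all local-geometric questions on $\check{\CM}_{\ul\BL}^{\hat{Z}}$ to the corresponding questions on $\wh Z$, and then, for the ``in particular'' assertions, to pass from $\wh Z$ to its special fiber $Z\subseteq\CF\ell_\BP\whtimes_\BaseOfD\Spec\kappa_{R_{\wh Z}}$, which by Definition \ref{DefLBC} is a union of closures of $L^+\BP$-orbits (Schubert-type subvarieties) in the affine flag variety.

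The first step is to note that the morphism $\pi^{loc}\circ s'\colon\CM'\to\wh Z$ of Theorem \ref{ThmRapoportZinkLocalModel}(b) is formally \'etale, while $\CM'\to\check{\CM}_{\ul\BL}^{\hat{Z}}$ is an \'etale cover (in particular faithfully flat). Formally \'etale morphisms between formal schemes that are locally formally of finite type over $R_{\wh Z}$ induce isomorphisms of completed local rings at corresponding closed points. The properties $S_i$, $R_i$, Cohen--Macaulayness, normality, and flatness over $R_{\wh Z}$ can all be detected on these completed local rings, and each is moreover preserved and reflected under faithfully flat \'etale base change. Chaining the two legs of the roof transfers every such property back and forth between $\check{\CM}_{\ul\BL}^{\hat{Z}}$ and $\wh Z$, which yields the first assertion of (a) and the first assertion of (b).

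For the concluding ``in particular'' statements I would pass to the special fiber. Under the hypothesis that $\zeta$ is not a zero divisor in $\CO_{\wh Z}$, flatness of $\wh Z$ over the discrete valuation ring $R_{\wh Z}$ is automatic, since torsion-freeness and flatness coincide over a DVR. The closed fiber $Z$ is, by construction, a union of closures of $L^+\BP$-orbits in $\CF\ell_\BP$; hence if $\BP$ satisfies $S_i$ (respectively is Cohen--Macaulay) in the sense of Definition \ref{DefPsatisfiesSerreCondition}, then so does $Z$. Combining flatness over the regular one-dimensional base $R_{\wh Z}$ with the depth formula $\operatorname{depth}(A)=\operatorname{depth}(R)+\operatorname{depth}(A/\Fm_R A)$ for flat local extensions $R\to A$ then promotes the Serre (resp.\ Cohen--Macaulay) condition from $Z$ to $\wh Z$, and via the first step to $\check{\CM}_{\ul\BL}^{\hat{Z}}$.

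The step I expect to be the main obstacle is the careful formulation of ``formally \'etale implies isomorphism on completed local rings'' in the formal-scheme setting of Theorem \ref{ThmRZisLFF}, where $\wh Z$ and $\check{\CM}_{\ul\BL}^{\hat{Z}}$ are only locally formally of finite type over $R_{\wh Z}$ rather than of finite type; this requires checking the usual deformation-theoretic characterisation and that completed stalks are noetherian. Once that is in hand, the remainder is routine commutative algebra over the DVR $R_{\wh Z}$.
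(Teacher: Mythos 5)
Your proposal is correct and follows essentially the same route as the paper: transfer the \'etale-local properties $S_i$, $R_i$ and flatness along the roof of Theorem \ref{ThmRapoportZinkLocalModel} (using that $\pi$ is an $L^+\BP$-torsor with \'etale-local sections and $\pi^{loc}\circ s'$ is formally \'etale), and then relate $\hat{Z}$ to its special fiber $Z$ via the regularity of $\zeta$, with $Z$ inheriting $S_i$ resp.\ Cohen--Macaulayness from the hypothesis on $\BP$. The only (inessential) deviation is in part b), where you obtain flatness of $\hat{Z}$ over the discrete valuation ring $R_{\hat{Z}}$ directly from $\zeta$-torsion-freeness, whereas the paper argues via Cohen--Macaulayness of $\hat{Z}$ and the miracle-flatness criterion of \cite[Proposition 6.1.5]{EGAIV}.
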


\begin{proof}
The first statement of part a) follows from Theorem \ref{ThmRapoportZinkLocalModel} and the fact that satisfying $R_i$ (resp. $S_i$) is an \'etale local property. For the second statement, since the element $\zeta$ is regular and by definition $Z$ satisfies $S_i$, we argue that $\hat{Z}$ satisfies $S_i$. \\     
The first statement of part b) is clear according to Theorem \ref{ThmRapoportZinkLocalModel} and that the henselization morphism $R\to R^h$ is faithfully flat. Note that $\breve{\CM}_{\ul\BL}^{\hat{Z}}$ is locally formally of finite type, see Theorem \ref{ThmRRZSp}. For the second statement, first observe that $\hat{Z}$ is Cohen-Macaulay according to \cite[Theorem~8.4]{PR2} and \cite[Theorem 17.3]{Matsumura}. Now the statement follows from \cite[IV, Proposition 6.1.5]{EGA}.

\end{proof}

\subsection{Relation to affine Deligne-Lusztig varieties}\label{SubSectRelToADLV} 

Fix a local $\nabla\scrH$-datum $(\BP,\hat{Z},b)$ and consider the corresponding Rapoport-Zink space $\breve{\CM}:=\breve{\CM}(\BP,\hat{Z},b)$, see \ref{eqAssignRZToLD}.

The following corollary can be viewed as a local version of \cite[Proposition 2.8]{VLaff}.

\begin{corollary}
The induced morphism $\breve{\CM}\to [L^+\BP\backslash \hat{Z}]$ of formal algebraic stacks, is formally smooth.
\end{corollary}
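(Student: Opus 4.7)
The plan is to leverage the local model roof of Theorem \ref{ThmRapoportZinkLocalModel} directly. First I would unravel the functor of points of $[L^+\BP\backslash\hat{Z}]$: a point is an $L^+\BP$-torsor $\CL_+$ together with an isomorphism $\tauLoc\colon\hat\sigma^\ast\CL\isoto\CL$ of associated loop torsors whose class in $[L^+\BP\backslash\wh{\SpaceFl}_\BP]$ lies in $[L^+\BP\backslash\hat Z]$, i.e.\ a local $\BP$-shtuka bounded by $\hat Z$. Under this identification, the morphism $\check{\CM}\to[L^+\BP\backslash\hat Z]$ is simply the forgetful morphism $(\ul\CL,\delta)\mapsto\ul\CL$. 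Using the construction of $\wt\CM^{\hat Z}_{\ul\BL}$ (which parametrizes $(\ul\CL,\delta,\gamma)$ with $\gamma$ a trivialization of $\hat\sigma^\ast\CL_+$), I would note that $\pi^{loc}$ is $L^+\BP$-equivariant for the natural action shifting $\gamma$, so the square with top row $\pi^{loc}$, bottom row the forgetful morphism, left column the $L^+\BP$-torsor $\pi$, and right column the atlas $q\colon\hat Z\to[L^+\BP\backslash\hat Z]$ is 2-commutative.

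Next, since formal smoothness is \'etale-local on the source, I would pass to an \'etale cover $U\to\check{\CM}^{\hat Z}_{\ul\BL}$. By part (b) of Theorem \ref{ThmRapoportZinkLocalModel} such a cover exists together with a section $s'\colon U\to\wt\CM^{\hat Z}_{\ul\BL}$ of $\pi$ for which $\pi^{loc}\circ s'\colon U\to\hat Z$ is formally \'etale. Combining with the 2-commutativity above, the restriction of the forgetful morphism to $U$ agrees with the composition
\[
U\;\xrightarrow{\,\pi^{loc}\circ s'\,}\;\hat Z\;\xrightarrow{\;q\;}\;[L^+\BP\backslash\hat Z].
\]

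Finally I would observe that $q$ is an $L^+\BP$-torsor, and since $\BP$ is smooth over $\BD$ the positive loop group $L^+\BP$ is a pro-smooth affine group scheme over $\BaseOfD$; hence $q$ is formally smooth. A composition of a formally \'etale morphism with a formally smooth morphism is formally smooth, and descending along the \'etale cover $U\to\check{\CM}^{\hat Z}_{\ul\BL}$ yields the claim.

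The main subtlety I expect is bookkeeping: one must check that the ``obvious'' forgetful functor $(\ul\CL,\delta)\mapsto\ul\CL$ really does coincide with the map induced by $\pi^{loc}$ after quotienting $\wt\CM^{\hat Z}_{\ul\BL}\twoheadrightarrow\check{\CM}^{\hat Z}_{\ul\BL}$ by $L^+\BP$, i.e.\ that $\pi^{loc}$ is $L^+\BP$-equivariant and that the resulting map into $[L^+\BP\backslash\hat Z]$ lands in the correct stack. Everything else is a formal consequence of properties already established in Theorem \ref{ThmRapoportZinkLocalModel}.
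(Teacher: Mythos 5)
Your argument is essentially correct, but it is worth comparing it with what the paper (which offers no details beyond ``immediate from Theorem \ref{ThmRapoportZinkLocalModel}'') most plausibly intends. Since $\pi^{loc}$ is $L^+\BP$-equivariant and $\pi$ is an $L^+\BP$-torsor, one has a canonical identification $\wt{\CM}_{\ul\BL}^{\hat Z}\cong\check{\CM}_{\ul\BL}^{\hat Z}\times_{[L^+\BP\backslash \hat Z]}\hat Z$ with the two projections being $\pi$ and $\pi^{loc}$; hence the formal smoothness of $\check{\CM}_{\ul\BL}^{\hat Z}\to[L^+\BP\backslash \hat Z]$ follows from part (a) alone by descent along the atlas $q\colon\hat Z\to[L^+\BP\backslash \hat Z]$, formal smoothness being local on the base for such covers. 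Your route instead uses part (b), and it needs two extra ingredients which you correctly identify but should justify: that $q$ is formally smooth (this holds because sections of $L^+\BP$-torsors lift along nilpotent thickenings of affine schemes, which is exactly the lifting argument carried out with $\gamma_0$ in the paper's proof of the theorem, using smoothness of $\BP$), and that formal smoothness may be tested after an \'etale surjection on the source (true for the \'etale-local lifting criterion appropriate for morphisms to stacks). So your proof works; it is simply less economical, since part (a) already suffices. One caveat: your opening identification of $[L^+\BP\backslash\hat Z](S)$ with bounded local $\BP$-shtukas is not accurate. The quotient stack classifies an $L^+\BP$-torsor together with an $L^+\BP$-equivariant map to $\hat Z$, i.e.\ a pair of torsors in bounded relative position, and the Frobenius linkage identifying the source torsor with $\hat\sigma^\ast\CL_+$ is forgotten; accordingly the induced morphism is not literally ``$(\ul\CL,\delta)\mapsto\ul\CL$''. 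This imprecision is harmless for your proof, because all you actually use is the 2-commutativity of the square formed by $\pi$, $\pi^{loc}$ and $q$, which holds by the very definition of the induced morphism — but the bookkeeping point you flag at the end should be resolved in this way rather than via that identification.
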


\begin{proof}
This immediately follows from local model theorem \ref{ThmRapoportZinkLocalModel}. For discussions about formal stacks see \cite[Appendix A]{Har1} or \cite[Section~2.1]{EsmailDissertation}.
\end{proof}

Assume that $\BP$ is a parahoric group scheme. This in particular implies that $\CF\ell_\BP$ is ind-projective. The above morphism induces a morphism $\breve{\CM}_s\to [L^+\BP\backslash Z]$ on the special fibers. As a set, $[L^+\BP\backslash Z]$ is given by a set of representatives $\{g_\omega\}_\omega$ corresponding to the orbits of the $L^+\BP$-action on $Z$, see Remark \ref{RemAffSchVarandIwahori_Weylgp}. This is indexed by a finite subset of the affine Weyl group $\wt W$ associated with $\BP$. We denote this subset by $Adm(Z)$. 
The special fiber $\breve{\CM}_s$ can be written as the  union of the following affine Deligne-Lusztig varieties
$$
X_Z(b)^\omega := \{g \in \CF\ell_\BP(\ol \BF) ; g^{
-1}b\sigma^\ast(g) \in S_\omega\}
$$
where $\omega$ lies in $ Adm(Z)$ and $S_\omega$ denotes the preimage of $\omega$ under the map $Z\to [L^+\BP\backslash Z]$.

\subsection{Formal nearby cycles}\label{Subsect Semi-simple trace and Frob}

Below we first discuss the nearby cycles for the case of global moduli stacks for $\FG$-shtukas, and then we disccuss the formal nearby cycles for the Rapoport-Zink spaces for local $\BP$-shtukas.  

\subsubsection{The case of the moduli stacks for global $\FG$-shtukas}

Recall from \cite{AH_LM} that the moduli stack $\nabla_n^\CZ\scrH_D^1(C,\FG)$ of global $\FG$-shtukas with $D$-level structure, for a divisor $D\subseteq C$, bounded by a global boundedness condition $\CZ$, in the sense of \cite[Definition 3.1.3]{AH_LM}, is an algebraic stack whose $T$-points, for $\BF_q$-scheme $T$, is given by the groupoid whose objects are tuples $(\CG, \psi, \ul s, \tau )$ consisting of a $\FG$-bundle $\CG$ over $C_T$, a trivialization $\psi: \CG \times_{C_T} D_T \isoto \FG \times_C D_T$, an n-tuple $\ul s\in C^n(T)$ of (characteristic) sections, and finally an isomorphism $\tau : \sigma^\ast \CG|_{C_T\setminus \Gamma_{\ul s}} \to \CG|_{C_T\setminus \Gamma_\ul s}$ with $\psi\circ\tau = \sigma^\ast(\psi)$, that is bounded by $\CZ$. Here $\Gamma_\ul s$ denotes the union $\cup_i\Gamma_{s_i}$ of the graphs $\Gamma_{s_i}$.
For the sake of simplicity let us assume that the reflex curve $C_\CZ$, see \cite[Definition 3.1.3]{AH_LM}, equals $C$ itself. There is a canonical map $\nabla_n^\CZ\scrH_D^1(C,\FG)\to C^n$ given by sending $(\CG, \psi, \ul s, \tau )$ to $\ul s$. Note that this stack is Deligne-Mumford \cite[Theorem 3.1.6]{AH_LM} and the forgetful morphism $\nabla_n^\CZ\scrH_D^1(C,\FG)\to \nabla_n^\CZ\scrH^1(C,\FG)$ is finite \'etale \cite[Theorem 6.7]{AH_Global}. Moreover, one can observe that for $D$ enough big, the stack $\nabla_n^\CZ\scrH_D^1(C,\FG)$ can be covered by quasi-projective open subschemes $\nabla_n^\CZ\scrH_D^1(C,\FG)_\alpha$; see \cite[Remark 2.9 and Theorem 3.15]{AH_Global}.

We proved in \cite[Theorem 3.2.1]{AH_LM} that $\CZ$ is a local model for the moduli stack $\nabla_n^\CZ\scrH_D^1(C,\FG)$. Fix a closed immersion $\delta: C\to C^n$, and set $\CX_\delta:=\nabla_n^\CZ\scrH_D^1(C,\FG)_\alpha\times_{C^n,\delta}C$ and $\CZ_\delta=\CZ\times_{C^n,\delta} C$. Furthermore, set $\CX_{\delta}^\nu:= \CX_\delta\times_C S$ and $ \CZ_{\delta}^\nu:=\CZ_\delta\times_C S$, where $S=\Spec A_{\nu}$ for a place $\nu$ on $C$. Assume further that $\CZ_{\delta}^\nu$ (and hence $\CX_{\delta}^\nu$), is flat over $S$. Let $s$ (resp. $\eta$) denote the special (resp. generic) point of $S$. Let $\kappa(s)$ (resp.   $\kappa(\eta)$), denote the corresponding residue fields and let $\ol S$ denote the formal spectrum of the integral closure of $A_{\nu}$ inside the separable closure of $\kappa(\eta)$, with $\ol s$ (resp. $\ol \eta$) the corresponding special (resp. generic) point. 
For $\CF$ in the bounded derived category $D_c^b(\CX_\eta^\nu, \ol\BQ_\ell)$, consider the usual nearby cycles sheaf $R\psi_{\CX_\delta}\CF=\ol i^\ast R \ol j_\ast \CF_\ol\eta$, where $\ol i: \CX_{\delta\ol s}^\nu\to  \CX_{\delta\ol S}^\nu$
and $j : \CX_{\delta\ol\eta}^\nu \to \CX_{\delta\ol S}^\nu$
are the closed and open immersions of the geometric special
and generic fibers of $\CX_\delta^\nu$ over $S$, and $\CF_\ol\eta$ is the pull-back of $\CF$ to $\CX_\ol\eta^\nu$. One similarly defines the nearby cycles sheaf $R\psi_{\CZ_\delta^\nu}\CF$. According to \cite[Theorem 3.2.1]{AH_LM} for a point $x$ in $\CX_\delta(\kappa_r)$, we have a roof of \'etale morphisms

\begin{equation}
\xygraph{
!{<0cm,0cm>;<1cm,0cm>:<0cm,1cm>::}
!{(0,0) }*+{\CU_x}="a"
!{(-1.5,-1.5) }*+{\CX_\delta^\nu}="b"
!{(1.5,-1.5) }*+{\CZ_\delta^\nu,}="c"
"a":_{\pi}"b" "a":^{\pi^{loc}}"c"
}  
\end{equation}
\noindent
which shows that there is an isomorphism $(R\psi_{\CX_\delta^\nu}\ol\BQ_\ell)_x\isoto (R\psi_{\CZ_\delta^\nu}\ol\BQ_\ell)_y$ of the stalks of the nearby cycles sheaves, for $y=\pi^{loc}(\wt x)$ with $\pi(\wt x)=x$. This in particular implies the following equality of semi-simple traces on the stalks of these sheaves. Let us summerize the above descussion in the following proposition.\\

\begin{proposition}
    Keep the above notation. Assume that $\CZ_\delta^\nu$ is flat over $S$. Then for a point $x$ in $\CX_\delta^\nu$ there is a point $y$ in $\CZ_\delta^\nu$ and an isomorphism $(R\psi_{\CX_\delta^\nu}\ol\BQ_\ell)_x\isoto (R\psi_{\CZ_\delta^\nu}\ol\BQ_\ell)_y$ of the stalks of the nearby cycles sheaves. In particular, for a field extension $\kappa_r/\kappa(\nu)$ of degree $r$, we have the following equality 
$$
tr^{ss}(Frob_r; (R\psi_{\CX_\delta}^\nu\ol{\BQ}_\ell)_x)=tr^{ss}(Frob_r; (R\psi_{\CZ_\delta}^\nu\ol{\BQ}_\ell)_y).
$$
of semi-simple traces on the stalks of these sheaves.
\end{proposition}

\forget{
\begin{proof}

According to \cite[Theorem 3.2.1]{AH_LM} for a point $x$ in $\CX_\delta(\kappa_r)$, we have a roof of \'etale morphisms

\begin{equation}
\xygraph{
!{<0cm,0cm>;<1cm,0cm>:<0cm,1cm>::}
!{(0,0) }*+{\CU_x}="a"
!{(-1.5,-1.5) }*+{\CX_\delta^\nu}="b"
!{(1.5,-1.5) }*+{\CZ_\delta^\nu,}="c"
"a":_{\pi}"b" "a":^{\pi^{loc}}"c"
}  
\end{equation}

which shows that there is an isomorphism $(R\psi_{\CX_\delta^\nu}\ol\BQ_\ell)_x\isoto (R\psi_{\CZ_\delta^\nu}\ol\BQ_\ell)_y$ of the stalks of the nearby cycles sheaves, for $y=\pi^{loc}(\wt x)$ with $\pi(\wt x)=x$. This in particular implies the equality $tr^{ss}(Frob_r; (R\psi_{\CX_\delta}^\nu)_x)=tr^{ss}(Frob_r; (R\psi_{\CZ_\delta}^\nu)_y)$ of semi-simple traces on the stalks of these sheaves.\\

\end{proof}

According to \cite{AH_LM} for a point $x$ in $\CX_\delta(\kappa_r)$, for a field extension $\kappa_r/\kappa(\nu)$ of degree $r$, we have a roof of \'etale morphisms

\begin{equation}
\xygraph{
!{<0cm,0cm>;<1cm,0cm>:<0cm,1cm>::}
!{(0,0) }*+{\CU_x}="a"
!{(-1.5,-1.5) }*+{\CX_\delta^\nu}="b"
!{(1.5,-1.5) }*+{\CZ_\delta^\nu,}="c"
"a":_{\pi}"b" "a":^{\pi^{loc}}"c"
}  
\end{equation}

which shows that there is an isomorphism $(R\psi_{\CX_\delta^\nu}\ol\BQ_\ell)_x\isoto (R\psi_{\CZ_\delta^\nu}\ol\BQ_\ell)_y$ of the stalks of the nearby cycles sheaves, for $y=\pi^{loc}(\wt x)$ with $\pi(\wt x)=x$, which in particular implies the following equality 
$$
tr^{ss}(Frob_r; (R\psi_{\CX_\delta}^\nu)_x)=tr^{ss}(Frob_r; (R\psi_{\CZ_\delta}^\nu)_y).
$$
of semi-simple traces on the stalks of these sheaves.\\
-------------------------------
}
\begin{remark}
For the discussion related to the flatness of the moduli stacks of bounded global $\FG$-shtukas see \cite[Proposition 4.5.3]{AH_LM}.
\end{remark}

\bigskip
\subsubsection{Formal nearby cycles for Rapoport-Zink spaces}
In the remaining part of this paper we discuss the analogous result for Rapoport-Zink spaces for local $\BP$-shtukas, using the local model theorem \ref{ThmRapoportZinkLocalModel}. The crucial difference is that we need to work with \emph{formal} nearby cycles (with compact support), rather than the usual nearby cycles and this makes the situation slightly more complicated. We discuss this below.\\

Throughout this subsection we assume that the group $\BP$ is a parahoric group scheme over $\BD$, in the sense of Bruhat and Tits \cite[D\'efinition 5.2.6]{B-T}. Moreover, in addition to the conditions stated in Definition \ref{DefLBC}\ref{DefBDLocal_C}, we assume that the boundedness condition $\hat{Z}$ also satisfies the following axioms 
\begin{enumerate}
    \item[i)] $\hat{Z}_R$ is a $\zeta$-adic formal scheme over $\Spf R$.
\item[ii)] There is a faithful representation $\rho: \BP\to \SL_r$ over $\BF_q\dbl z\dbr$ and a positive integer $n$ such that all the induced morphism $\rho_\ast : \hat{Z}_R\to \wh{\CF\ell}_{SL_r}$ factor through $\wh{\CF\ell}_{SL_r}^{(n)}$. Here $\wh{\CF\ell}_{SL_r}^{(n)}$ is the closed ind-subscheme of $\wh{\CF\ell}_{SL_r}$
given by
$$
\CD
\wh{\CF\ell}_{SL_r}^{(n)}(S):= \{(\CL_+,\delta: \CL\to (L\SL_r)_S)\in \wh{\CF\ell}_{SL_r}(S); \text{such that}\\
~~~~~~~\wedge_{\CO_S\dbl z \dbr}^j M(\delta)(M(\CL_+))\subset (z-\zeta)^{n(j^2-jr)}\wedge_{\CO_S\dbl z \dbr}^jM((L^+\SL_r)_S)\},
\endCD
$$
see Proposition \ref{PropFlRepUnBoundedRZ}. Here $M(\CL_+)$ denote the pair $(M, \alpha)$ corresponding to $L^+\SL_r$-torsor $\CL_+$, that consists of a finite locally free
$\CO_S\dbl z\dbr$-module of rank $r$ on $S$ and a trivialization $\alpha: \wedge_{\CO_S\dbl z \dbr}^r\tilde{\to} \CO_S\dbl z\dbr$.
Note that $M(-)$ is an equivalence of categories.   

\item[iii)]  Let $(\hat{Z}_R)^{an}$
be the strictly $R\dbl 1/\zeta\dbr$-analytic space associated with $\hat{Z}_R$. It can be shown that there is a closed
subscheme $\hat{Z}_E$ of the affine Grassmannian $Gr_{\BP}^{\textbf{B}_\text{dR}}
 \times_{\BF_q\dpl \zeta\dpr} \Spec E_{\hat{Z}}$, with $E_{\hat{Z}}=Frac(R_{\hat{Z}})$, such that $(\hat{Z}_R)^{an}$ arises by
base change to $R\dbl
\zeta\dbr$ from the strictly $E_{\hat{Z}}$-analytic space $(\hat{Z}_E)^{an}$ associated with $\hat{Z}$. One requires that $\hat{Z}_E$, and hence also all the $(\hat{Z}_R)^{an}$ are invariant under the left multiplication of $\BP(_\bullet \dbl z-\zeta\dbr)$ in $Gr_{\BP}^{\textbf{B}_{dR}}$. 
 Note that in the function fields set up the affine Grassmannian $Gr_{\BP}^{\textbf{B}_{dR}}$ is the ind-scheme corresponding to the sheaf of sets for the fpqc topology on  $\BF_q\dpl\zeta\dpr$ associated with the presheaf $X\mapsto P(\CO_X\dpl z-\zeta\dpr)/\BP(\CO_X\dbl z-\zeta\dbr)$. See \cite[Chapter 4]{HV21}.
\end{enumerate}

\noindent
suggested by Hartl and Viehmann in \cite[Definition 2.2]{HV21}, and in addition, we assume that

\begin{enumerate}
    \item[iv)] the formal schemes $\hat{Z}_R$ are flat over $\Spf R$.
    
\end{enumerate}

\begin{remark}

Note that the above additional conditions i)-iii) are in fact designed to fulfill the following desire. Namely, it can be seen easily that these conditions imply that $\hat{Z}_R$ are $\zeta$-adic formal schemes, projective over $\Spf R$, and moreover, the associated $R[1\slash \zeta]$-analytic spaces $(\hat{Z}_R)^{an}$
arise from an strictly $E_{\hat{Z}}$-analytic space  $\hat{Z}^{an}:=(\hat{Z}_E)^{an}$ 
associated with a projective scheme $\hat{Z}_E$ over $\Spec E_{\hat{Z}}$, which is a closed subscheme of the affine Grassmannian $Gr_{\BP}^{\textbf{B}_\text{dR}}\times_{\BF_q\dpl\zeta\dpr} \Spec E_{\hat{Z}}$, see \cite[Proposition 2.6]{HV21} for the details. Also the last condition is essential to the theory of nearby and vanishing cycles. Note however that for a tame group $G$, when the boundedness condition $\hat{Z}$ comes from a global boundedness condition, determined by a cocharacter $\mu$ of $G$, then this condition automatically holds, e.g. see \cite{Zhu}. Here let us briefly recall the context, namely, the cocharacter $\mu$ defines a global boundedness condition, see \cite[Definition 3.1.3]{AH_LM}, that corresponds to the Schubert variety $\CZ:=\CZ_\mu$ inside the global affine Grassmannian $GR_1(C,\FG)$, which is by definition the closure of the Schubert variety $\CS(\mu)$, lying in the generic fiber $GR_1(C,\FG)_\eta$. For a place $\nu$ on $C$, set $\BP:=\BP_\nu:=\FG\wh{\times}_C A_\nu$, and then $\hat{Z}=\hat{Z}_\mu$ is defined to be the local boundedness condition associated with $\CZ$, see \cite[Prop. 4.3.3]{AH_LM}, and Remark \ref{RemAffSchVarandIwahori_Weylgp}. Note in addition that this coincides the local model $M_\mu$ in the sense of \cite[Definition 2.5]{Richarz16}, and note further that since $\BP$ is parahoric, the formal scheme $\hat{Z}$ can be thought as a proper scheme over $\Spec A_\nu$, and the formal nearby cycles sheaf $R\Psi_{\hat{Z}}\BQ_\ell$ coincide the usual nearby cycles sheaf $R\psi_{\hat {Z}}\BQ_\ell$ for schemes.   
\end{remark}


Below we will prefer to use a variant of the Berkovich's formal nearby cycles sheaf for a formal scheme $\FX$ over a complete discrete valuation ring $R$, which has been introduced and studied by Y. Mieda in \cite{Mieda}. As a technical feature of this theory, one may use it in order to study the \emph{compactly supported} nearby cycles $R\Psi_{\FX,c}\Lambda$ of Rapoport-Zink spaces. Note that according to \cite[Theorem 1.1. (iv)]{Mieda}, for a locally algebrizable and pseudo-compactifiable $\FX$, the compactly supported cohomology $H_c^q(\FX_{\ol\eta},\Lambda)$ of the geometric generic fiber $\FX_{\ol\eta}$ coincides the compactly supported cohomology \forget{$H_c^q(\FX_{red},R\Psi_{\FX,c}\Lambda)$}of $\FX_{red}$, with coefficients in $R\Psi_{\FX,c}\Lambda$, where $\Lambda=\BZ/\ell^n\BZ$ with a prime $\ell$ invertible in $R$. Note however that this is not the case with the Berkovich's nearby cycles in general, e.g. see \cite[Remark 4.30]{Mieda}. In this section, using the uniformization theory of global $\FG$-shtukas \cite{AH_Global}, we also discuss a compatibility result with the Berkovich's formal nearby cycles in certain cases. 
Before discussing the Mieda's variant of Berkovich's formal nearby cycles, let us recall the following

\begin{remark}\label{RemBerNVCandRZSpaces}
Let $\CS:=\Spf R$ be a formal spectrum of a complete discrete valuation ring $R$ and let $s$ (resp. $\eta$) denote the special (resp. generic) point of $\CS$. Let $\kappa(s)$ (resp.   $\kappa(\eta)$), denote the corresponding residue fields. Let $\ol \CS$ denote the formal spectrum of the integral closure of $R$ inside the separable closure of $\kappa(\eta)$, with $\ol s$ (resp. $\ol \eta$) the corresponding special (resp. generic) point. 
For a formal scheme $\FX$, locally of finite presentation over $R$, let $\FX_s$ and $\FX_\eta$ (resp $\FX_\ol s$ and $\FX_\ol \eta$) denote the associated closed and generic fiber of $\FX$ (resp. $\ol \FX:=\FX\times_\CS \ol \CS$). According to \cite{BerkI} one may define the following functor
$$
\CD
R\Psi_\FX^{\textbf{Ber}}: D_b^c(\FX_\ol\eta ,\ol\BQ_\ell)\to D_b^c(\FX_\ol s \times \eta, \ol\BQ_\ell).\\
\endCD
$$
Moreover, there is a spectral sequence 
\begin{equation*}\label{Eq_VanishingCyclesSpectralSeq}
E_2^{p,q}:=\Koh^p(\FX_\ol s, R^q\Psi_\FX^\textbf{Ber}\ol\BQ_\ell)\Rightarrow H^{p+q}(\FX_\ol\eta,\ol\BQ_\ell),
\end{equation*}
which is equivariant under the action of the Galois group $\Gamma = \Gal(\kappa(\ol\eta)/\kappa(\eta))$. Note that the induced filtration $\CW$ on $\CV=\Koh^\ast(\FX_\ol\eta,\BQ_\ell)$ is admissible (in the sense that it is stable under the Weil group action and that the inertia group $I:=\ker\left(\Gamma \to \Gal(\kappa(\ol s)/\kappa(s))\right)$ operates on $gr_{\bullet}^\CW(\CV)$ through a finite quotient). \forget{Recall that for an arbitrary $\BQ_\ell$-representation of $W$ with admissible increasing filtration $\CW$.  The semisimple trace of the j-Frobenius on $\CV$ is defined as follows
$$
tr^{ss}(\sigma^j|\CV):=\sum_k \Tr(\sigma^j|(gr_k^\CW(\CV))^I) 
$$ 
\noindent
 allows to define the semi-simple trace of Frobenius $$tr^{ss}(Fr_q; (R\Psi_\FX^\textbf{Ber}\ol\BQ_\ell)_x)$$ on the stalk of the sheaf of formal nearby cycles $R\Psi_\FX^{\textbf{Ber}}\ol\BQ_\ell$ at $x$. Note however that we unfortunately can not use this method to study Rapoport-Zink spaces for local $\BP$-shtukas, because the cohomology is infinite-dimensional as the space is not quasi-compact, moreover, to implement the Berkovich's formal nearby cycles with compact support, one needs to assume that $\FX$ is formally of finite presentation (i.e. quasi-compact), while, apart from Lubin-Tate case, even the connected components of $\breve{\CM}:=\breve{\CM}_{\ul\BL}^{\hat{Z}}$ may fail to be quasi-compact.}
\end{remark}

 Let us now recall the definition and some basic properties of the formal nearby cycles functor according to \cite{Mieda}.  \\

\point{}\label{MiedaFNC} Let $\FX$ be a locally noetherian formal scheme over $\CS:=\Spf R$, and let $\CI_\FX$ be the largest ideal of definition of $\FX$. Let $\FX_{red}$ denote the associated locally noetherian reduced scheme $(\FX, \CO_\FX/\CI_\FX)$. 
 For an open subscheme $U$ of $\FX_{red}$, we denote by $\FX_{\slash U}$ the open formal subscheme of $\FX$ whose underlying space is $U$. If $U$ is affine then $\FX_{\slash U}$ is also affine and we set $\wh{X}_{\slash U}=\Spec A_U$, where $A_U=\Gamma(\FX_{\slash U},\CO_{\FX})$, see \cite[Lemma 2.5]{Mieda}.

Assume that $\FX$ is quasi-excellent, i.e. for every affine open subscheme $U$ of $\FX_{red}$, the ring $A_U$ is quasi-excellent in the sense of \cite[ Expos\'e I, D\'efinition 2.10]{ILO}, and $\FX_{red}$ is separated. Fix a prime number $\ell$ which is invertible in R, and set $\Lambda:=\BZ\slash\ell^n\BZ$ with $n\geq 0$. Let $\ul\CZ=(\CZ_1,\CZ_2)$ be a pair of closed formal subschemes of $\FX_s$, with $\CZ_1\subseteq \CZ_2$. According to \cite{Mieda}, to such data one assigns the corresponding sheaf of formal nearby cycles  $R\Psi_{\FX,\ul\CZ}\Lambda$ in $D^+(\FX_{red}, \Lambda)$. For this, one first considers an affine open covering $U$ of $\FX_{red}$ and the induced hypercovering $a: U_\bullet \to \FX_{red}$. Then, one needs to observe that $a$ induces a morphism of universally cohomological descent, see \cite[Expos\'e \text{$\text{V}^{\text{bis}}$}, Proposition 3.3.1 (a)]{SGA4}). We let $\wh X_{\slash U_\bullet}$ denote the associated simplicial $S$-scheme, where $S:=\Spec R$. Here we briefly recall the construction of $R\Psi_{\FX,\ul\CZ}\Lambda$.

\begin{definition-remark}\label{Def-RemMFNC} 
\begin{enumerate}
    \item 
Let $\CF = (\CF
^m)_{m\geq 0}$ be a $\Lambda$-sheaf on $U_\bullet$, where $a: U_\bullet\to X$ is a hypercovering of a scheme $X$, corresponding to an open cover $U:=\{U_i\}_{i\in I}$. We say that $\CF$ is \emph{cartesian} if
for every structure morphism $\phi: U_m \to U_n$ of $U_\bullet$, $\phi^\ast\CF^n\to\CF^m$ is an isomorphism. Denote by $D_{cart}^+(U_\bullet, \Lambda)$ the full subcategory of $D^+(U_\bullet, \Lambda)$ consisting of lower bounded complexes whose cohomology are all cartesian. Note that a $\Lambda$-sheaf $\CF=(\CF^m)_{m\geq 0}$ on $U_\bullet$ is cartesian if and only if it arises from a sheaf $\CG$ on $X$ via $a^\ast$, i.e. $\CF\cong a^\ast\CG$. Moreover the functor $Ra_\ast$ gives a quasi-inverse of $a^\ast: D^+(X, \Lambda)\to D_{cart}^+(U_\bullet, \Lambda)$; see \cite[Proposition 2.3]{Mieda}.
\item 
Let $\ul\CZ=(\CZ_1,\CZ_2)$ be a pair of closed formal subschemes of $\FX_s$ as above. Define
$$
R\Psi_{\FX,\ul\CZ,U}\Lambda:= Ra_\ast i^\ast Rj_\ast Rj^! R\psi_{\wh X\slash U_\bullet}\Lambda,
$$
where $i$ (resp. $j$) denote the natural immersion $U_\bullet \to (\wh X_{\slash U_\bullet})_s$ (resp. $j: \wh Z_{\slash U_\bullet}\to (\wh X_{\slash U_\bullet})_s$), and the functor $R\psi_{\wh X\slash U_\bullet}\Lambda$ is defined as the derived push-forward $$ D^+((\wh X_{\slash U_\bullet})_\ol\eta,\Lambda)\to D^+(X_{\slash U_\bullet},\Lambda)$$ followed by pull-back functor $ D^+(\wh X_{\slash U_\bullet},\Lambda)\to D^+((X_{\slash U_\bullet})_s,\Lambda)$. Note that for a $\Lambda$-sheaf $\CF=(\CF^m)_{m\geq0}$ on $(\wh X_{\slash U_\bullet})_\ol\eta$, the restriction of $R\psi_{\wh X_{\slash U_\bullet}}\CF$ to $\wh X_{\slash U_m}$ is the usual nearby cycles complex $R\psi_{\wh X_{\slash U_m}}\CF^m$.
\item 
Set $R\Psi_{\FX,\ul\CZ}\Lambda:= \dirlim[U]R\Psi_{\FX,\ul\CZ,U}\Lambda$, where the inductive limit is taken over the small category whose objects are affine open coverings of $\FX_{red}$.\forget{ and whose morphisms $\Hom(U,V)$ consists of one element for every object $U$ and $V$.} 
\item
One can observe that for a locally noetherian formal scheme $\FX$ with separated $\FX_{red}$, which is locally algebrizable, the formal nearby cycles $R\Psi_{\FX,\ul\CZ} \Lambda$ is a constructible  complex, see \cite[Proposition 3.21]{Mieda}.

\item 
For $\ul \CZ := (\varnothing , \FX_s)$ (resp. $\ul \CZ := (\varnothing, \FX_{red})$), define $ 
R\Psi_{\FX}\Lambda:=R\Psi_{\FX,\ul\CZ}\Lambda$ (resp. $R\Psi_{\FX,c}\Lambda:=R\Psi_{\FX,\ul\CZ}\Lambda)$.

\end{enumerate}

\end{definition-remark}

\begin{proposition}\label{PropnearbycyclesandhatZ}
Assume that $\BP$ is parahoric and let $\hat{Z}:= [(R,\hat{Z}_R)]$ be a bound, then we have

$$
H_c^q((\hat{Z}_R)_\ol\eta,\Lambda)= H^q(Z_\ol s,R\psi_{\hat{Z}_R}\Lambda).
$$

\end{proposition}

\begin{proof}

Note that by the assumption that $\BP$ is parahoric, $\wh{\CF\ell}_{\BP,R}$ is ind-projective \cite[Theorem A]{Richarz16}, and thus
 $\hat{Z}_R$ is algebrizable. Therefore one can see that the sheaf of formal nearby cycles $R\Psi_{\hat{Z}_R}\Lambda$ coincide the corresponding scheme theoretic nearby cycles sheaf $R\psi_{\hat{Z}_R}\Lambda$; e.g. see \cite[Corollary 5.3]{BerkI}. Now the statement follows from \cite[Theorem 1.1 iv) and Proposition 2.12]{Mieda}.

\end{proof}

\point{}\label{Point1} Let us fix an integer $n$ and consider complete discrete valuation rings $\BF_i\dbl z_i\dbr$ for $i=1,\ldots,n$ with finite residue fields $\BF_i$, and fraction fields $Q_i=\BF_i\dpl z_i\dpr$. Let $\BP_{i}$ be a smooth affine group scheme over $\Spec\BF_i\dbl z_i\dbr$ with connected reductive generic fiber $\genericG_i:=\BP_i\times_{\BF_i\dbl z_i\dbr}\Spec\BF_i\dpl z_i\dpr$, and let $\hat{Z}_{i}=[\hat{Z}_{i,R'_i}]$ with $\hat{Z}_{i,R'_i}\subset\wh{\SpaceFl}_{\BP_{i},R'_i}:=\SpaceFl_{\BP_i}\whtimes_{\BF_i}\Spf R'_i$ be a bound in the sense of Definition~ \ref{DefLBC} with reflex ring $R_{\hat{Z}_i}=:R_i=\kappa_i\dbl\xi_i\dbr$. Let $\BaseFldInSectUnif$ be a field containing all $\kappa_i$. For all $i$ let $\ul{\BL}_i$ be a trivialized local $\BP_i$-shtuka over $\BaseFldInSectUnif$. 
Recall that the Rapoport--Zink space $\breve{\CM}_{\ul{\BL}_i}^{\hat{Z}_i}$ is a formal scheme locally formally of finite type over $R_{\hat{Z}_i}$, see Theorem \ref{ThmRRZSp}. Therefore the product $\breve{\CM}_{(\ul{\BL}_i)_i}^{\hat{\ul Z}}:=\breve{\CM}_{\ul{\BL}_1}^{\hat{Z}_1}\whtimes_\BaseFldInSectUnif\ldots\whtimes_\BaseFldInSectUnif\breve{\CM}_{\ul{\BL}_n}^{\hat{Z}_n}$ is a formal scheme locally formally of finite type over $\Spf\BaseFldInSectUnif\dbl\ul\xi\dbr:=\Spf\BaseFldInSectUnif\dbl\xi_1,\ldots,\xi_n\dbr$. Note that $\breve{\CM}_{(\ul{\BL}_i)_i}^{\hat{\ul Z}}$ is quasi-excellent, see Theorem \ref{ThmRRZSp} and \cite[Th\'eor\`eme 9.2]{ILO}. Recall that the group $J_{\ul{\BL}_i}(Q_i)=\QIsog_\BaseFldInSectUnif(\ul{\BL}_i)$ of quasi-isogenies of $\ul\BL_i$ over $\BaseFldInSectUnif$ acts naturally on $\breve{\CM}_{\ul{\BL}_i}^{\hat{Z}_i}$. Let $J_{(\ul{\BL}_i)_i}:=\prod_i J_{\ul{\BL}_i}(Q_i)$. We say that $\Gamma \subseteq J_{(\ul{\BL}_i)_i}$, which is discrete for the product of the $z_i$-adic topologies, is \emph{separated}, if it is separated in the profinite topology, that is, if for every $1\ne g \in \Gamma$ there is a normal subgroup of finite index that does not contain $g$.\\

The following proposition in particular gives a comparison between the Berkovich's formal nearby cycles and the nearby cycles in the sense of Definition-Remark \ref{Def-RemMFNC} for the Rapoport-Zink sopace $\breve{\CM}_{\ul\BL_0}^{\hat{\ul Z}}$ for local $\BP$-shtukas. Note that for this we require that the local $\BP$-shtuka $\BL_0$ arises from a global $\FG$-shtuka, under the global-local functor.

We let $\FG$ be a parahoric (Bruhat-Tits) group scheme over $C$. Recall that a smooth affine group scheme $\FG$ over C is called a parahoric  group scheme if all geometric fibers of $\FG$ are connected and the generic fiber of $\FG$ is reductive over $\BF_q(C)$, and moreover, for any ramification point $\nu$ of $\FG$ (i.e. those points $\nu$ of $C$, for which the fiber above $\nu$ is not reductive) the group scheme $\BP_\nu := \FG \times_C \Spec A_\nu$ is a parahoric group scheme over $A_\nu$, as defined by Bruhat and Tits \cite{B-TII}[D\'efinition 5.2.6].
Note that every connected reductive group over $Q$ has integral models over $C$ which are parahoric group schemes; see \cite[BT84, $\S\S$ 4.6 and 5.1.9]{B-TII}.

\begin{proposition}\label{PropMiedaCompBerkovich}
Keep the above notation and assume that $(\ul\BL_i)_i:=\hat{\ul\Gamma}(\ul\CG_0)$, for some global $\FG$-shtuka $\ul\CG_0$, where $\hat{\ul\Gamma}(-)$ denote the global-local functor; see \cite[Definition 5.1]{AH_Global}. Here $\FG$ is a parahoric Bruhat-Tits group scheme over $C$, with $\BP_i=\FG_{\nu_i}:=\FG\times_C\wh\CO_{C,\nu_i}$.  For any tuple $\hat{\ul Z}:=(\hat Z_i)_{i=1,\dots,n}$ of boundedness conditions, there is a canonical isomorphism

$$
\pi_{red}^\ast R\Psi_{ \left(\Gamma\backslash\breve{\CM}_{(\ul{\BL}_i)_i}^{\hat{\ul Z}}\right)_\Delta}^\textbf{Ber} \Lambda \to R\Psi_{\left(\breve{\CM}_{(\ul{\BL}_i)_i}^{\hat{\ul Z}}\right)_\Delta}\Lambda,
$$
for a separated discrete subgroup $\Gamma\subseteq J_{(\BL_i)_i}$. Here $\pi$ denotes the projection $\breve{\CM}_{(\ul{\BL}_i)_i}^{\hat{\ul Z}}\to\Gamma\backslash\breve{\CM}_{(\ul{\BL}_i)_i}^{\hat{\ul Z}}$, and the subscript $\Delta$ indicates that these spaces are obtained by pulling back the corresponding spaces under the morphism $\Spf\BaseFldInSectUnif\dbl\xi\dbr\to\Spf\BaseFldInSectUnif\dbl\ul\xi\dbr$, given by $\xi_i \mapsto \xi$.\\

\end{proposition}

\begin{proof}

We first consider the moduli stack $\nabla_n^{H, \ul{\hat{Z}}}\scrH^1(C,\FG)$ for global $\FG$-shtukas bounded by $\ul{\hat{Z}}$, which are equipped with level $H$-structure, for some compact open subgroup $H\subseteq \FG(\BA^\ul\nu)$, with fixed characteristics $\ul\nu:=(\nu_i)_i$; see Remark \ref{RemNablaHData} and also \cite[Definition 7.2]{AH_Global}. We recall from \cite[Chapter 7]{AH_Global} that by the uniformiazation theory of the moduli stacks of global $\FG$-shtukas, we have the following map

\begin{equation}\label{EqUnifMorph}
\Theta\colon  I_{\ul\CG_0}\!(Q) \big{\backslash}\bigl(\breve{\CM}_{(\ul{\BL}_i)_i}^{\hat{\ul Z}}\times \Isom^{\otimes}(\omega^\circ,\check{\CV}_{\ul{\CG}_0})/H\bigr) \;\longto\; \nabla_n^{H,\ulHZ}\scrH^1(C,\FG)^{\ul\nu}\whtimes_{R_{\ulHZ}} \Spf\breve R_{\ulHZ}
\end{equation}
of ind-DM-stacks over $\Spf\breve R_{\ulHZ}$, which is a monomorphism in the sense that the functor $\Theta$ is fully faithful. Here, $I_{\ul\CG_0}\!(Q)$ denote the (abstract) group $\QIsog_\BaseFldInSectUnif(\ul\CG_0)$ of self quasi-isogenies of $\ul\CG_0$, see \cite[Definition~3.4]{AH_Global}, and $\Isom^{\otimes}(\omega^\circ,\check{\CV}_{\ul{\CG}_0})$ denote the set of tensor isomorphisms from neutral fiber functor $\omega^\circ$ to the (dual) Tate functor $\check{\CV}_{\ul{\CG}_0}$, see \cite[Chapter 6]{AH_Global}. Note that $I_{\ul\CG_0}\!(Q)$ acts on the first factor $\breve{\CM}_{(\ul{\BL}_i)_i}^{\hat{\ul Z}}$ via the obvious morphism $I_{\ul\CG_0}\!(Q)\to J_{(\ul\BL_i)_i}$ and on the second factor by its operation on the Tate module $\check{\CV}_{\ul{\CG}_0}$, and furthermore, $H\subseteq \FG(\BA^\ul\nu)\cong \Aut(\omega^\circ)$ operates on the second factor, according to the tannakian formalism. In addition, we recall that the uniformization map induces an isomorphism to certain completion of the stack of global $\FG$-shtukas. Namely, let $\CZ$ be the union of the $\Theta(T_j)$, where $\{T_j\}$ is a set of representatives of $I_{\ul\CG_0}\!(Q)$-orbits of the irreducible components of the scheme $\prod_i X_{Z_i}(\ul\BL_i)\times \Isom^{\otimes}(\omega^\circ,\check{\CV}_{\ul{\CG}_0})/H$ and let $\nabla_n^{H,\ulHZ}\scrH^1(C,\FG)^{\ul\nu}_{/\CZ}$ be the formal completion of $\nabla_n^{H,\ulHZ}\scrH^1(C,\FG)^{\ul\nu}\whtimes_{R_{\ulHZ}} \Spf\breve R_{\ulHZ}$ along $\CZ$, see \cite[Remark~7.12 (a)]{AH_Global}. Here  $X_{Z_i}(\ul\BL_i)$ is the affine Deligne-Lusztig variety corresponding to the local $\BP$-shtuka $\ul\BL_i$ and the boundedness condition $\hat{Z}_i$. Then, the morphism $\Theta$ induces an isomorphism of locally noetherian, adic formal algebraic Deligne-Mumford stacks locally formally of finite type over $\Spf\breve R_{\ulHZ}$
$$
\Theta_{\CZ}\colon  I_{\ul\CG_0}\!(Q) \big{\backslash}\bigl(\breve{\CM}_{(\ul{\BL}_i)_i}^{\hat{\ul Z}}\times \Isom^{\otimes}(\omega^\circ,\check{\CV}_{\ul{\CG}_0})/H\bigr)\;\isoto\; \nabla_n^{H,\ulHZ}\scrH^1(C,\FG)^{\ul\nu}_{/\CZ}\,.
$$

Now notice that one may take the level $H$-structure enough small, such that $\nabla_n^{H, \ul{\hat{Z}}}\scrH^1(C,\FG)$ can be covered by a union of quasi-projective schemes; see \cite[Proposition 3.31]{AH_MR} (also compare \cite[Proposition 2.1]{Var} for the split reductive case). Moreover by \cite[Proposition 7.7]{AH_Global} we have the following decomposition

\begin{equation}\label{EqSourceOfTheta}
I_{\ul\CG_0}\!(Q) \big{\backslash}\bigl(\breve{\CM}_{(\ul{\BL}_i)_i}^{\hat{\ul Z}}\times \Isom^{\otimes}(\omega^\circ,\check{\CV}_{\ul{\CG}_0})/H\bigr) \enspace\cong\enspace \coprod_{\bar\gamma} \Gamma_{\bar\gamma}\big{\backslash}\breve{\CM}_{(\ul{\BL}_i)_i}^{\hat{\ul Z}}\,.
\end{equation}

Here $\bar\gamma:=\gamma H\in\Isom^{\otimes}(\omega^\circ,\check{\CV}_{\ul{\CG}_0})/H$ runs through a set of representatives for the countable double coset $I_{\ul\CG_0}\!(Q) \backslash\Isom^{\otimes}(\omega^\circ,\check{\CV}_{\ul{\CG}_0})/H$, and 
$
\Gamma_{\bar\gamma}\;:=\; I_{\ul\CG_0}\!(Q)\cap \bigl( J_{(\ul{\BL}_i)_i}\times \gamma H \gamma^{-1}\bigr)\;\subset\; \prod_i J_{(\ul{\BL}_i)_i}
$
is a subgroup, which is discrete for the product of the $\nu_i$-adic topologies, and separated in the profinite topology. Concerning this we observe that the quotient spaces $\Gamma_{\bar\gamma}\big{\backslash}\breve{\CM}_{(\ul{\BL}_i)_i}^{\hat{\ul Z}}$ are locally algebrizable. Since the projection map 
\begin{equation}\label{EqProjisAdicandEtale}
\breve{\CM}_{(\ul{\BL}_i)_i}^{\hat{\ul Z}} \to \Gamma_{\bar\gamma}\big{\backslash}\breve{\CM}_{(\ul{\BL}_i)_i}^{\hat{\ul Z}},
\end{equation}
is adic and \'etale; see \cite[Proposition 4.27]{AH_Local}, thus, after restricting to $\Delta$, we may conclude by \cite[Proposition 3.16]{Mieda} and \cite[Th\'eor\`eme 9.2]{ILO}; see also \cite[Proposition 2.12]{Mieda}.\\

\end{proof}

\begin{remark}\label{RemGlobalMethodisInevitable}
Note that along the proof of the above Theorem, we have shown that there is an \'etale and adic morphism $\breve{\CM}_{(\ul{\BL}_i)_i}^{\hat{\ul Z}} \to \Gamma\big{\backslash}\breve{\CM}_{(\ul{\BL}_i)_i}^{\hat{\ul Z}}$, where $\Gamma$ is a subgroup of $J_{(\ul\BL_i)_i}$, and that the quotient $\Gamma\big{\backslash}\breve{\CM}_{(\ul{\BL}_i)_i}^{\hat{\ul Z}}$ is locally algebraizable. Note further that to show this the use of the force of global methods, i.e. the theory of global $\FG$-shukas and their uniformization theory, looks inevitable. For this reason we had to restrict to the case that the local $\BP_{\nu_i}$-shtukas $\ul\BL_i$ are coming from a global $\FG$-shtuka $\ul\CG_0$. Note in addition that this is in fact similar to the analogues situation over number fields, see \cite{F-M04} Corollaire 3.1.5 and  Corollaire 3.2.7 regarding the quasi-algebraizablity, except that one can achieve the algebrizablity only after passing to the quotient by $\Gamma$; compare \cite[Proposition 3.1.3]{F-M04} and also proof of \cite[Theorem 6.23]{RZ}.   

\end{remark}

\begin{remark}\label{RemDirLimCoeff}
Let us set $\FX:= \left(\Gamma\backslash\breve{\CM}_{(\ul{\BL}_i)_i}^{\hat{\ul Z}}\right)_\Delta$. In the situation of the above Proposition, we can see that $\FX_\ol\eta$ admits a covering $\{\CU\}_{\CU\in\BU}$ by quasi-compact open formal subschemes which are also taut. The reason is that as in the proof of the above Proposition $\nabla_n^{H,\ul{\hat{Z}}}\scrH^1(C,\FG)$ admits a covering by quasi-compact opens which in turn induces a covering for $\nabla_n^{H,\ul{\hat{Z}}}\scrH^1(C,\FG)_{/\CZ}$ and thus for $\FX_{\ol\eta}$ by quasi-compact opens $\CU$, which are special in the sense of \cite{Berk}. This ensures that they are taut, see \cite[Lemma 4.18]{Mieda}. Therefore we observe that  $H_c^i(\FX_\ol\eta, \BZ_\ell)\cong \dirlim[\CU]H_c^i(\CU, \BZ_\ell)
$, see Proposition 2.1.~iv) of \cite{Hub98}. Note further that one may equip the analytic space corresponding to the generic fiber of the formal scheme $\breve{\CM}_{\ul\BL}^{\hat{Z}}$ with level $H$-structure, for compact open subgroup $H\subseteq \FG(\BF_q\dpl z\dpr)$; see \cite{HV21}. This can be achieved by trivializing the universal local $\BP$-shtuka.

\end{remark}

\begin{proposition}
 Keep the assumption and notation in Proposition \ref{PropMiedaCompBerkovich}. Then there is a natural isomorphism 

$$
H_c^i(\FX_\ol\eta, \Lambda)\to H_c^i(\FX_{red}, R\Psi_{\FX,c}\Lambda),
$$
where $\FX:= \left(\Gamma\backslash\breve{\CM}_{(\ul{\BL}_i)_i}^{\hat{\ul Z}}\right)_\Delta$ and $\FX_{red}=\Gamma\backslash \prod_i X_Z(\ul\BL_i)$.

\end{proposition}

\begin{proof}
Regarding the proof of Proposition \ref{PropMiedaCompBerkovich} we observe that $\FX$ locally algebraizable; see also remark \ref{RemGlobalMethodisInevitable}. The statement now follows from \cite[Theorem 1.1.iv)]{Mieda},  Theorem \ref{ThmRRZSp} and the argument given in Remark \ref{RemDirLimCoeff} together with \cite[Example 4.25. ii))]{Mieda}
\end{proof}

\begin{corollary}\label{Cor_SemisimpleTrace}
Let $\nu$ be a place on $C$ and set $\BP:=\BP_\nu$. Let $\ul\BL$ be a local $\BP$-shtuka, which comes from a global $\FG$-shtuka $\ul\CG$ under the functor $\hat{\Gamma}_{\nu}(-)$. Set $\breve{\CM}:=\breve{\CM}_{\ul\BL}^{\hat{Z}}$ and $\kappa=\kappa_{\hat{Z}}$. We have the following statements

\begin{enumerate}
    \item 
There is a canonical isomorphism $R\Psi_{\Gamma \backslash \breve\CM}\Lambda \cong R\Psi_{\Gamma \backslash \breve\CM}^{\textbf{Ber}}\Lambda$ for a separated discrete subgroup $\Gamma\subseteq J(\ul\BL)$.
\item
Let $\kappa_r/\kappa$ be a finite extension of degree $r$. Let $x$ be a point in $\breve{\CM}(\kappa_r)$ and let $y$ be the image $\pi^{loc}(y')$ of a point $y'$ in $\wt\CM_{\ul\BL}^{\hat{Z}}$ above $x$ under $\pi$ (see the local model roof (\ref{EqnablaHRoof})). Then 
 
$$
tr^{ss}\left(Frob_r; \left(R\Psi_{\breve{\CM},c}\ol\BQ_\ell\right)_x\right)=tr^{ss}\left(Frob_r; \left(R\psi_{\hat{Z}}\ol\BQ_\ell\right)_y\right).
$$
Here $Frob_r$ denotes the geometric Frobenius in $\Gal(\ol\kappa_r/\kappa_r)$.  
\end{enumerate}

\end{corollary}

\begin{proof}

a) Recall from proof of Proposition \ref{PropMiedaCompBerkovich} that for some separated discrete subgroup $\Gamma\subseteq J(\ul\BL)$, the projection $\breve\CM\to\Gamma\backslash\breve{\CM}$ is adic and \'etale. The second one is locally noetherian quasi-excellent formal algebraic scheme, see \cite[Proposition 2.12]{Mieda}, with separated underlying reduced subscheme, see Theorem \ref{ThmRRZSp}, which is in addition locally algebraizable, see the proof of Proposition \ref{PropMiedaCompBerkovich}; see also Remark \ref{RemGlobalMethodisInevitable}. Now the isomorphism follows from \cite[Theorem 1.1.iii)]{Mieda}.\\

b) This statement follows from Remark \ref{RemGlobalMethodisInevitable} and the following roof

\begin{equation*}
\xygraph{
!{<0cm,0cm>;<1cm,0cm>:<0cm,1cm>::}
!{(0,0) }*+{\CM'}="a"
!{(-0.75,-1) }*+{\breve{\CM}}="b"
!{(1,-2) }*+{\hat{Z},}="c"
!{(-1.5,-2) }*+{\Gamma\slash\breve{\CM}}="d"
"a":^{\text{\'et}}"b" "a":^{\pi^{loc}\circ s'}"c"
"b":^{\text{\'et}}"d"
}  
\end{equation*}
of \'etale morphisms; see the diagram \ref{EqM'}, constructed in the course of the proof of theorem \ref{ThmRapoportZinkLocalModel}, and the explanation given about the morphism (\ref{EqProjisAdicandEtale}). 
Note that since $\BP$ is parahoric $\wh{\CF\ell}_\BP$ is ind-proper, and thus $\hat{Z}$ is algebraizable, in particular $R\Psi_{{\hat{Z}},c}\ol\BQ_\ell$ is isomorphic to the usual nearby cycles sheaf $R\psi_{\hat{Z}}\ol\BQ_\ell$ for schemes, see proof of Proposition \ref{PropnearbycyclesandhatZ}. 

\end{proof}

\forget{
\begin{corollary}\label{Cor_SemisimpleTrace}
Let $\nu$ be a place on $C$ and set $\BP:=\BP_\nu$. Let $\ul\BL$ be a local $\BP$-shtuka, which comes from a global $\FG$-shtuka $\ul\CG$ under the functor $\hat{\Gamma}_{\nu}(-)$. Set $\breve{\CM}:=\breve{\CM}_{\ul\BL}^{\hat{Z}}$ and $\kappa=\kappa_{\hat{Z}}$. Let $\kappa_r/\kappa$ be a finite extension of degree $r$. Let $x$ be a point in $\breve{\CM}(\kappa_r)$ and let $y$ be the image $\pi^{loc}(y')$ of a point $y'$ in $\wt\CM_{\ul\BL}^{\hat{Z}}$ above $x$ under $\pi$, see  the local model roof (\ref{EqnablaHRoof}). Then 
 
$$
tr^{ss}\left(Frob_r; \left(R\Psi_{\breve{\CM}}\ol\BQ_\ell\right)_x\right)=tr^{ss}\left(Frob_r; \left(R\psi_{\hat{Z}}\ol\BQ_\ell\right)_y\right).
$$
Here $Frob_r$ denotes the geometric Frobenius in $\Gal(\ol\kappa_r/\kappa_r)$.

\end{corollary}

\begin{proof}

First observe that there is an isomorphism between  $(R\Psi_{\breve\CM})_x$ and $(R\Psi_{\Gamma\backslash \breve\CM})_\ol x$, for some separated discrete subgroup $\Gamma\subseteq J(\ul\BL)$, where $\ol x$ denote the image of $x$ under the projection $\breve\CM\to\Gamma\backslash\breve{\CM}$; see proof of Proposition \ref{PropMiedaCompBerkovich}. The second one is locally noetherian quasi-excellent formal algebraic scheme, see \cite[Proposition 2.12]{Mieda}, with separated underlying reduced subscheme, see Theorem \ref{ThmRRZSp}, which is in addition locally algebraizable, see the proof of Proposition \ref{PropMiedaCompBerkovich}. Thus we have $(R\Psi_{\Gamma \backslash \breve\CM})_\ol x \cong (R\Psi_{\Gamma \backslash \breve\CM}^{\textbf{Ber}})_\ol x$. Now the statement follows from the following roof

\begin{equation*}
\xygraph{
!{<0cm,0cm>;<1cm,0cm>:<0cm,1cm>::}
!{(0,0) }*+{\CM'}="a"
!{(-0.75,-1) }*+{\breve{\CM}}="b"
!{(1,-2) }*+{\hat{Z},}="c"
!{(-1.5,-2) }*+{\Gamma\slash\breve{\CM}}="d"
"a":^{\text{\'et}}"b" "a":^{\pi^{loc}\circ s'}"c"
"b":^{\text{\'et}}"d"
}  
\end{equation*}

of \'etale morphisms; see the diagram \ref{EqM'} constructed in the course of the proof of theorem \ref{ThmRapoportZinkLocalModel}, and the explanation given about the morphism (\ref{EqProjisAdicandEtale}). 
Note that since $\BP$ is parahoric $\wh{\CF\ell}_\BP$ is ind-proper, and thus $\hat{Z}$ is algebraizable, in particular $R\Psi_{{\hat{Z}},c}\ol\BQ_\ell$ is isomorphic to the usual nearby cycles sheaf $R\psi_{\hat{Z}}\ol\BQ_\ell$ for schemes, see proof of Proposition \ref{PropnearbycyclesandhatZ}. 

\end{proof}

\bigskip
}
\forget{
-----------------------------
\begin{remark}
 Assume that $G$ is tamely ramified. Furthermore, assume that $\hat{Z}$ comes from a cocharacter $\mu$ of $G$. Let us explain it a bit further.  The cocharacter $\mu$ defines a global boundedness condition, see \cite[Definition 3.1.3]{AH_LM}, that corresponds to the Schubert variety $\CZ:=\CZ_\mu$ inside the global affine Grassmannian $GR_1(C,\FG)$, which is by definition the closure of the Schubert variety $\CS(\mu)$, lying in the generic fiber $GR_1(C,\FG)_\eta$, inside $GR_1(C,\FG)$. For a place $\nu$ on $C$, set $\BP:=\BP_\nu:=\FG\wh{\times}_C A_\nu$, and let $\hat{Z}=\hat{Z}_\mu$ be a local boundedness condition associated with $\CZ$, see \cite[Prop. 4.3.3]{AH_LM}, and Remark \ref{RemAffSchVarandIwahori_Weylgp}. Note that since $\BP$ is parahoric, $\hat{Z}$ can be thought as a proper scheme over $\Spec A_\nu$. Note in addition that this coincides the local model $M_\mu$ in the sense of \cite[Definition 2.5]{Richarz16}, and moreover, the formal nearby cycles sheaf $R\Psi_{\hat{Z}}\BQ_\ell$ coincide the usual nearby cycles sheaf $R\psi_{\hat{ Z}}\BQ_\ell$ for schemes. According to the Haines-Kottwitz's conjecture, see \cite{HR1}, we see that the function
$$
\CD
\breve{\CM}(\kappa_r)\to \ol\BQ_\ell\\
~~~~~~~~~~~~~~~~~~~~~~~~~~~x\mapsto tr^{ss}(Frob_r; (R\Psi_{\breve{\CM},c}\ol\BQ_\ell)_x).
\endCD
$$
can be described in terms of the associated Bernstein
function $z_{\mu,r}$, lying in the center of the corresponding (parahoric) Hecke algebra. For some further results about the action of Frobenius on the cohomology of $\hat{Z}$ see \cite{AH_MR}.

\end{remark}

\begin{remark}[Lang's cycles and the cohomology of the generic fiber of $\breve{\CM}$] Note that unlike the global situation, the roof

\begin{equation*}
\xygraph{
!{<0cm,0cm>;<1cm,0cm>:<0cm,1cm>::}
!{(0,0) }*+{\CM'}="a"
!{(-1.5,-1.5) }*+{\breve{\CM}}="b"
!{(1.5,-1.5) }*+{\hat{Z},}="c"
"a":"b" "a":^{\pi^{loc}\circ s'}"c"
}  
\end{equation*}
constructed in the proof of Theorem \ref{ThmRapoportZinkLocalModel}, is defined globally over $\breve{\CM}:=\breve{\CM}_{\ul\BL}^{\hat{Z}}$. When the special fiber $Z$ is smooth, this allows to define a $Ch^\ast(Z)$-module structure on the cohomology with compact support $\Koh_c^{\ast}(\breve{\CM}_\ol\eta,\ol\BQ_\ell)$ of the generic fiber of $\breve{\CM}$. Note that when $\FG$ is constant, i.e. $\FG=G\times_{\BF_q} C$ for split reductive group $G/\BF_q$, we observe by \cite[Theorem 0.1 and Proposition 3.9]{AH_MR} that $Ch^\ast(Z)$ is finite.
\end{remark}
--------------------------------
}
%
%

{\small

}

\forget{
\vfill

\begin{minipage}[t]{1\linewidth}
\noindent
Esmail Arasteh Rad, 
Universit\"at M\"unster,
Mathematisches Institut, Einsteinstr.~62\\
D -- 48149 M\"unster, Germany
\\[1mm]
\end{minipage}
}

\begin{minipage}[t]{1\linewidth}
\noindent
\small{}Esmail Arasteh Rad, School of Mathematics, Institute for Research in Fundamental Sciences (IPM), P.O.
Box: 19395-5746, Tehran, Iran.
\emph{email}: \href{mailto:earasteh@ipm.ir}{earasteh@ipm.ir}

\end{minipage}

{}

\end{document}